\begin{document}

\title{A Low Rank Quaternion Decomposition Algorithm and Its Application in Color Image Inpainting}
\author{Yannan Chen\footnote{%
    School of Mathematical Sciences, South China Normal University, Guangzhou, China; ({\tt ynchen@scnu.edu.cn}).
    This author was supported by the National Natural Science Foundation of China (11771405) and
    the Natural Science Foundation of Guangdong Province, China (2020A1515010489).}
  \and
    Liqun Qi\footnote{%
    Department of Applied Mathematics, The Hong Kong Polytechnic University,
    Hung Hom, Kowloon, Hong Kong ({\tt maqilq@polyu.edu.hk}).}
    \and
    Xinzhen Zhang\thanks{School of Mathematics, Tianjin University, Tianjin 300354 China; ({\tt xzzhang@tju.edu.cn}). This author's work was supported by NSFC (Grant No.  11871369). }
  \and and \
    Yanwei Xu\thanks{Huawei Theory Research Lab, Hong Kong, China; ({\tt xuyanwei1@huawei.com}).}
    \thanks{Future Network Theory Lab, 2012 Labs
Huawei Tech. Investment Co., Ltd, Shatin, New Territory, Hong Kong, China; ({\tt xuyanwei1@huawei.com}).}
}
\date{\today}
\maketitle

\begin{abstract}
In this paper, we propose a lower rank quaternion decomposition algorithm and apply it to color image inpainting.   We introduce a concise form for the gradient of a real function in quaternion matrix variables.    The optimality conditions of our quaternion least squares problem have a simple expression with this form.  The convergence and convergence rate of our algorithm are established with this tool.  

  \medskip

  \textbf{Key words.} quaternion matrices, gradient, lower rank quaternion decomposition, color image inpainting.

\end{abstract}

\renewcommand{\Re}{\mathds{R}}
\newcommand{\rank}{\mathrm{rank}}
\renewcommand{\span}{\mathrm{span}}
\newcommand{\X}{\mathcal{X}}
\newcommand{\A}{\mathcal{A}}
\newcommand{\B}{\mathcal{B}}
\newcommand{\C}{\mathcal{C}}
\newcommand{\OO}{\mathcal{O}}
\newcommand{\e}{\mathbf{e}}
\newcommand{\0}{\mathbf{0}}
\newcommand{\dd}{\mathbf{d}}
\newcommand{\ii}{\mathbf{i}}
\newcommand{\jj}{\mathbf{j}}
\newcommand{\kk}{\mathbf{k}}
\newcommand{\va}{\mathbf{a}}
\newcommand{\vb}{\mathbf{b}}
\newcommand{\vc}{\mathbf{c}}
\newcommand{\vg}{\mathbf{g}}
\newcommand{\vr}{\mathbf{r}}
\newcommand{\vt}{\rm{vec}}
\newcommand{\vx}{\mathbf{x}}
\newcommand{\vy}{\mathbf{y}}
\newcommand{\y}{\mathbf{y}}
\newcommand{\vz}{\mathbf{z}}
\newcommand{\T}{\top}

\newtheorem{Thm}{Theorem}[section]
\newtheorem{Def}[Thm]{Definition}
\newtheorem{Ass}[Thm]{Assumption}
\newtheorem{Lem}[Thm]{Lemma}
\newtheorem{Prop}[Thm]{Proposition}
\newtheorem{Cor}[Thm]{Corollary}

\section{introduction}

In 1996, Sangwine \cite{Sa96} proposed to encode three channel components of an RGB (Red-Green-Blue) image on the three imaginary parts of a pure quaternion.   In 1997, Zhang \cite{Zh97} established the quaternion singular value decomposition (QSVD) of a quaternion matrix.   Also see \cite{LS03, SL06}.  Since then, quaternion matrices and their QSVD are used widely in color image processing, including color image denoising and inpainting \cite{JNW19, XZ18, XYXZN15, ZKW16}.   Recently, Chen, Xiao and Zhou \cite{CXZ19} proposed low rank quaternion approximation (LRQA) for color image denoising and inpainting.   They proposed to use the quaternion rank or some rank surrogates of a quaternion variable matrix $X$ as a regularized term in the approximation of $X$ to a given quaternion data matrix $Y$.

In this paper, we propose a lower rank quaternion decomposition (LRQD) algorithm and apply it to color image inpainting.
To conduct convergence analysis for our algorithm, we need to consider optimization problems of real functions in quaternion matrix variables.   To handle this, we introduce a concise form for the gradient of a real function in quaternion matrix variables.   The optimality conditions of our quaternion least squares problem have a simple expression with this form.
With this tool, convergence and convergence rate of our algorithm are established.  

The rest of the paper is organized as follows.   Some necessary preliminary knowledge on quaternions and quaternion matrices are given in the next section.  A lower rank quaternion decomposition algorithm (LRQD)  is presented in Section 3.   In Section 4, we introduce a concise form for the gradient of a real function in quaternion matrix variables.   The gradients of some real functions  in quaternion matrix variables, which are useful for our least squares model, have simple expressions.  The convergence analysis of our algorithm is presented in Section 5.  We show that the {K}urdyka-{\L}ojasiewicz inequality \cite{AB09, ABRS10, BDL07} holds for the critical points of our algorithm  there.  The LRQD algorithm is applied to color image inpainting.  
In Section 6, we make some concluding remarks.

\section{Preliminary}

\subsection{Quaternions}

In this paper, the real field, the complex field and the quaternion field are denoted by ${\mathbb R}$, $\mathbb C$ and $\mathbb Q$, respectively.  Furthermore, scalars, vectors, matrices and tensors are denoted by small letters, bold small letters, capital letters and calligraphic letters, respectively.   We denote vectors with matrix components by bold capital letters.   For example, we denote ${\bf Z} = (A, B, X)$.
We use $\0, O$ and $\OO$ to denote zero vector, zero matrix and zero tensor with adequate dimensions.
An exception is that $\ii, \jj$ and $\kk$ denote the three imaginary units of quaternions.  We use the notation in \cite{Zh97, WLZZ18}.    We have
$$\ii^2 = \jj^2 = \kk^2 =\ii\jj\kk = -1,$$
$$\ii\jj = -\jj\ii = \kk, \ \jj\kk = - \kk\jj = \ii, \kk\ii = -\ii\kk = \jj$$.
These rules, along with the distribution law, determine the product of two quaternions.   Hence, the multiplication of quaternions is noncommutative.

Let $x = x_0 + x_1\ii + x_2\jj + x_3\kk \in {\mathbb Q}$, where $x_0, x_1, x_2, x_3 \in {\mathbb R}$.
Then the conjugate of $x$ is
$$\bar x \equiv x^* = x_0 - x_1\ii - x_2\jj - x_3\kk,$$
the modulus of $x$ is
$$|x| = |x^*| = \sqrt{xx^*} = \sqrt{x^*x} = \sqrt{x_0^2 + x_1^2 + x_2^2 + x_3^2},$$
and if $x \not = 0$, then $x^{-1} = {x^* \over |x|^2}$.

\subsection{Quaternion Matrices}

Denote the collections of real, complex and quaternion $m \times n$ matrices by ${\mathbb R}^{m \times n}$, ${\mathbb C}^{m \times n}$ and ${\mathbb Q}^{m \times n}$, respectively.

Then $A= (a_{ij}) \in {\mathbb Q}^{m \times n}$ can be denoted as
\begin{equation} \label{e1}
A = A_0 + A_1\ii + A_2\jj + A_3\kk,
\end{equation}
where $A_0, A_1, A_2, A_3 \in {\mathbb R}^{m \times n}$.   The transpose of $A$ is $A^\T = (a_{ji})$. The conjugate of $A$ is $\bar A = (a_{ij}^*)$.   The conjugate transpose of $A$ is $A^* = (a_{ji}^*) = \bar A^T$.   The Frobenius norm of $A$ is
$$\|A\|_F = \sqrt{\sum_{i=1}^m \sum_{j=1}^n |a_{ij}|^2}.$$
By \cite{WLZZ18}, we have the following proposition.

\begin{Prop} \label{p2.1}
Suppose that $A \in {\mathbb Q}^{m \times r}$ and $B \in {\mathbb Q}^{m \times r}$.  Then
$$\|AB\|_F \le \|A\|_F \|B \|_F.$$
\end{Prop}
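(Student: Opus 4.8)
The plan is to reduce this matrix inequality to two elementary scalar facts about the quaternion modulus together with the classical Cauchy--Schwarz inequality over ${\mathbb R}$. (I read the hypothesis as $A \in {\mathbb Q}^{m\times r}$ and $B \in {\mathbb Q}^{r\times n}$, so that the product $AB$ is defined; the displayed dimensions appear to be a typo.) Write $C = AB \in {\mathbb Q}^{m\times n}$ with entries $c_{ij} = \sum_{k=1}^r a_{ik}b_{kj}$. Since $\|AB\|_F^2 = \sum_{i,j}|c_{ij}|^2$ by definition of the Frobenius norm, it suffices to bound each $|c_{ij}|$ in terms of the rows of $A$ and columns of $B$.

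First I would record the two scalar facts. For $x,y \in {\mathbb Q}$ one has $|xy| = |x|\,|y|$: indeed, using $(xy)^* = y^*x^*$,
$$|xy|^2 = (xy)(xy)^* = x\,(yy^*)\,x^* = x\,|y|^2\,x^* = |y|^2\,xx^* = |x|^2|y|^2,$$
where the one place noncommutativity must be watched is that $|y|^2 = yy^*$ is a nonnegative \emph{real} scalar and hence commutes with $x^*$; in particular one must \emph{not} use the (false) identity $(xy)^* = x^*y^*$. Second, identifying $x = x_0 + x_1\ii + x_2\jj + x_3\kk$ with $(x_0,x_1,x_2,x_3)\in{\mathbb R}^4$, the modulus $|x|$ is exactly the Euclidean norm, so the triangle inequality $|x+y| \le |x|+|y|$ holds, and by induction $\bigl|\sum_k q_k\bigr| \le \sum_k |q_k|$.

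Combining these with Cauchy--Schwarz for real $r$-vectors gives, for each fixed $i,j$,
$$|c_{ij}| = \Big|\sum_{k=1}^r a_{ik}b_{kj}\Big| \le \sum_{k=1}^r |a_{ik}b_{kj}| = \sum_{k=1}^r |a_{ik}|\,|b_{kj}| \le \Big(\sum_{k=1}^r |a_{ik}|^2\Big)^{1/2}\Big(\sum_{k=1}^r |b_{kj}|^2\Big)^{1/2}.$$
Squaring, then summing over $i$ and $j$, the right-hand side factors as $\big(\sum_{i,k}|a_{ik}|^2\big)\big(\sum_{j,k}|b_{kj}|^2\big) = \|A\|_F^2\,\|B\|_F^2$, and taking square roots finishes the proof.

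I do not expect a serious obstacle: the only subtle point is the multiplicativity $|xy| = |x|\,|y|$, whose proof must genuinely use that $yy^*$ is real rather than the tempting but incorrect order-preserving conjugation rule. An alternative, essentially equivalent route would be to pass to the complex adjoint representation $\chi(\cdot)\in{\mathbb C}^{2m\times 2r}$ of a quaternion matrix, invoke submultiplicativity of the Frobenius norm over ${\mathbb C}$, and use $\|\chi(A)\|_F = \sqrt{2}\,\|A\|_F$ (see \cite{WLZZ18}); but the direct argument above is shorter and self-contained.
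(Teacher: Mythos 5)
Your proof is correct, and it is worth noting that the paper itself offers no proof of Proposition \ref{p2.1} at all: it simply cites \cite{WLZZ18}. Your entrywise argument --- multiplicativity of the quaternion modulus $|xy|=|x||y|$ (with the correct observation that $(xy)^*=y^*x^*$ and that $yy^*$ is a real scalar and hence commutes past $x^*$), the triangle inequality for the Euclidean modulus on ${\mathbb R}^4$, and Cauchy--Schwarz applied to the real vectors $(|a_{ik}|)_k$ and $(|b_{kj}|)_k$ --- is complete and self-contained, and you are right that the displayed hypothesis $B\in{\mathbb Q}^{m\times r}$ is a typo, since otherwise $AB$ is not even defined. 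The only inaccuracy is in your closing aside: passing to the complex representation and invoking Frobenius submultiplicativity over ${\mathbb C}$ together with $\|A^C\|_F=\sqrt{2}\,\|A\|_F$ yields only $\sqrt{2}\,\|AB\|_F\le 2\,\|A\|_F\|B\|_F$, i.e.\ $\|AB\|_F\le\sqrt{2}\,\|A\|_F\|B\|_F$, which is weaker by a factor of $\sqrt{2}$; to recover the sharp constant along that route one must bound one factor by its spectral norm instead. Since that alternative is not part of your actual proof, it does not affect the correctness of your argument.
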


A square matrix $A \in {\mathbb Q}^{m \times m}$ is called a unitary matrix if and only if $AA^* = A^*A = I_m$, where $I_m \in {\mathbb R}^{m \times m}$ is the real $m \times m$ identity matrix.

For the complex representation of a quaternion matrix $A \in {\mathbb Q}^{m \times n}$ with the expression (\ref{e1}), we follow \cite{WLZZ18}, and denote it as $A^C$.  Let $B_1 = A_0 + A_1\ii$ and $B_2 = A_3 + A_4\ii$.   Then $B_1, B_2 \in {\mathbb C}^{m \times n}$, and $A = B_1 + B_2\jj$.  The complex representation of $A$ is
$$A^C = \left( {\ B_1 \ B_2 \atop -\bar B_2 \ \bar B_1} \right).$$

A color image dataset can be expressed as a third order tensor $\A \in {\mathbb R}^{m \times n \times 3}$.
On the other hand, we may also represent it by a pure quaternion matrix
$$A = A(:,:,1)\ii + A(:,:,2)\jj + A(:,:,3)\kk \in {\mathbb Q}^{m \times n},$$
where  $A(:,:,1), A(:,:,2)$ and $A(:,:,3)$ are the three frontal slices of $\A$.

We have the following theorem for the QSVD of a quaternion matrix \cite{Zh97}.

\begin{Thm} {\bf (Zhang 1997)} \label{t2.1}
Any quaternion matrix $X \in {\mathbb Q}^{m \times n}$ \cite{Zh97} has the following QSVD form
\begin{equation} \label{e2}
X = U\left({\Sigma_r \ O \atop O \ \ O}\right)V^*,
\end{equation}
where $U \in {\mathbb Q}^{m \times m}$ and $V \in {\mathbb Q}^{n \times n}$ are unitary,  and $\Sigma_r$ = diag$\{ \sigma_1, \cdots, \sigma_r\}$ is a real nonnegative $r \times r$ diagonal matrix, with $\sigma_1 \ge \cdots \ge \sigma_r$ as the singular values of $X$.
\end{Thm}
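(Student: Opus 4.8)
\noindent\emph{Sketch of proof.} The plan is to reduce the construction of the QSVD to a spectral decomposition of the Hermitian, positive semidefinite quaternion matrix $M := X^*X \in {\mathbb Q}^{n\times n}$, and then to imitate the classical real/complex SVD argument, taking care that quaternion multiplication is noncommutative.

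First I would establish a spectral theorem for Hermitian quaternion matrices: every $M \in {\mathbb Q}^{n\times n}$ with $M^* = M$ can be written as $M = V\Lambda V^*$ with $V \in {\mathbb Q}^{n\times n}$ unitary and $\Lambda = \mathrm{diag}(\lambda_1,\dots,\lambda_n)$ \emph{real}, $\lambda_1 \ge \cdots \ge \lambda_n$. One self-contained route is induction on $n$: the Rayleigh quotient $\vx \mapsto \vx^* M \vx$ is real-valued (since $M = M^*$) and continuous on the compact unit sphere $\{\vx\in{\mathbb Q}^n : \vx^*\vx = 1\}$, hence attains a maximum $\lambda_1$ at some unit vector $\vx_1$; writing out the first-order stationarity condition in the $4n$ real coordinates shows $M\vx_1 = \lambda_1\vx_1$ with $\lambda_1 = \vx_1^* M \vx_1 \in {\mathbb R}$. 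Because $M$ is Hermitian and $\lambda_1$ is real, $M$ maps the orthogonal complement $\vx_1^{\perp}$ into itself, and the inductive hypothesis applied there produces the remaining columns of $V$. (Alternatively, one may pass through the complex representation $M^C \in {\mathbb C}^{2n\times 2n}$ of Subsection~2.2, which is Hermitian; its real eigenvalues occur in equal pairs owing to the symplectic symmetry relating $M^C$ and $\overline{M^C}$, and a quaternion eigenbasis of $M$ is recovered from the paired complex eigenvectors.) Applying this to $M = X^*X$, each $\lambda_i = \vx_i^* X^*X\vx_i = \|X\vx_i\|_F^2 \ge 0$.

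Next, let $r := \rank(X)$, write $\Lambda = \mathrm{diag}(\sigma_1^2,\dots,\sigma_r^2,0,\dots,0)$ with $\sigma_1 \ge \cdots \ge \sigma_r > 0$, set $\Sigma_r = \mathrm{diag}(\sigma_1,\dots,\sigma_r)$, and partition $V = (V_1\ V_2)$ with $V_1 \in {\mathbb Q}^{n\times r}$ collecting the eigenvectors for the nonzero eigenvalues. Then $MV_2 = O$, so $XV_2 = O$ because $\|XV_2\|_F^2 = \mathrm{tr}\big(V_2^* X^* X V_2\big) = \mathrm{tr}\big(V_2^*(MV_2)\big) = 0$. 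Define $U_1 := XV_1\Sigma_r^{-1} \in {\mathbb Q}^{m\times r}$; then $U_1^* U_1 = \Sigma_r^{-1} V_1^* M V_1 \Sigma_r^{-1} = \Sigma_r^{-1}\Sigma_r^{2}\Sigma_r^{-1} = I_r$, so $U_1$ has orthonormal columns. By standard Gram--Schmidt over ${\mathbb Q}$, extend $U_1$ to a unitary $U = (U_1\ U_2) \in {\mathbb Q}^{m\times m}$. Computing $U^*XV$ blockwise and using $XV_1 = U_1\Sigma_r$, $U_2^* U_1 = O$ and $XV_2 = O$ yields $U^* X V = \left({\Sigma_r\ O \atop O\ \ O}\right)$, that is, $X = U\left({\Sigma_r\ O \atop O\ \ O}\right)V^*$. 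Since the $\sigma_i$ are the square roots of the nonzero eigenvalues of $X^*X$, they are exactly the singular values of $X$.

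I expect the main obstacle to be the spectral theorem for Hermitian quaternion matrices --- specifically, verifying that a maximizer of the Rayleigh quotient really is a (right) eigenvector with a \emph{real} eigenvalue, and that the orthogonal complement is invariant, all without tacitly invoking commutativity; once that is in hand, the unitary extension of $U_1$ and the blockwise bookkeeping are routine. If one instead invokes the complex-representation dictionary of \cite{WLZZ18}, the bulk of the argument can be transported from the ordinary complex SVD, the only real subtlety being to track the pairing of singular values and singular vectors forced by the symplectic structure of $X^C$.
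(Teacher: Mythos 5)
The paper itself gives no proof of Theorem~\ref{t2.1}; it is quoted from Zhang's 1997 paper, so there is no internal argument to compare yours against. That said, your sketch is the standard and correct route: diagonalize the Hermitian positive semidefinite matrix $X^*X$ with a quaternion unitary $V$ and real eigenvalues, set $U_1 = XV_1\Sigma_r^{-1}$, check $U_1^*U_1 = I_r$ and $XV_2 = O$, extend to a full unitary $U$, and read off $U^*XV$ blockwise. All the noncommutativity checkpoints you flag do go through: $\vx^*M\vx$ is real because it equals its own conjugate; the Lagrange condition in the $4n$ real coordinates (one real constraint, hence one real multiplier) forces $M\vx_1 = \lambda_1\vx_1$ with $\lambda_1\in\Re$; since $\lambda_1$ is real it commutes with quaternions, so the invariance of $\vx_1^{\perp}$ and the identity $V_1^*MV_1 = \Sigma_r^2$ are unproblematic; and $\mathrm{tr}(C^*C)=\|C\|_F^2$ is valid over $\mathbb Q$ because the diagonal entries of $C^*C$ are real. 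Two small tidy-ups: you should not \emph{assume} at the outset that the number of nonzero eigenvalues of $X^*X$ equals $\rank(X)$ --- define $r$ as that number and deduce $\rank(X)=r$ from the finished factorization (which is in fact how the paper subsequently defines the quaternion rank, as the number of positive singular values); and if you take the complex-representation shortcut via $X^C$, you must verify that the complex SVD of $X^C$ can be chosen compatible with the symplectic symmetry $J\overline{X^C}J^{-1}=X^C$ so that the unitaries descend to quaternion unitaries --- that pairing argument is the one genuinely nontrivial step of that route and deserves more than a parenthesis.
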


The quaternion rank of $X$ is the number of its positive singular values, denoted as rank$(X)$.

\begin{Cor} \label{C2.2}
Suppose that quaternion matrix $X \in {\mathbb Q}^{m \times n}$.   Then
\begin{equation} \label{e3}
{\rm rank}(X) \le \min \{ m, n \}.
\end{equation}
\end{Cor}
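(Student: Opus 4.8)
The plan is to read the inequality directly off the QSVD form supplied by Theorem~\ref{t2.1}. First I would invoke that theorem to write $X = U\left({\Sigma_r \ O \atop O \ \ O}\right)V^*$, where $U \in {\mathbb Q}^{m \times m}$ and $V \in {\mathbb Q}^{n \times n}$ are unitary and $\Sigma_r = {\rm diag}\{\sigma_1, \cdots, \sigma_r\}$ is a real nonnegative $r \times r$ diagonal matrix whose positive diagonal entries are, by definition, exactly the singular values of $X$. Hence ${\rm rank}(X)$, being the number of positive singular values, satisfies ${\rm rank}(X) \le r$.

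Next I would observe that the central matrix $\left({\Sigma_r \ O \atop O \ \ O}\right)$ must be of size $m \times n$ for the triple product $U(\cdot)V^*$ to be defined, since $U$ is $m \times m$ and $V^*$ is $n \times n$. Its leading block $\Sigma_r$ is $r \times r$, and an $r \times r$ block can occupy the top-left corner of an $m \times n$ array only if $r \le m$ and $r \le n$, that is, $r \le \min\{m, n\}$. Chaining this with ${\rm rank}(X) \le r$ gives ${\rm rank}(X) \le \min\{m,n\}$, which is exactly (\ref{e3}).

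There is essentially no obstacle here: the bound is a pure bookkeeping consequence of the shape of the decomposition in (\ref{e2}). The only points worth a moment's care are the degenerate situations --- $X = O$, where one takes $r = 0$; and the cases $m \le n$ or $m \ge n$, in which one of the zero blocks flanking $\Sigma_r$ in (\ref{e2}) is empty --- but in every one of these the same dimension count applies unchanged, so no separate argument is needed.
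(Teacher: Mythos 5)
Your proof is correct and follows exactly the route the paper intends: the corollary is stated as an immediate consequence of the QSVD in Theorem~\ref{t2.1} together with the definition of rank as the number of positive singular values, and your dimension count on the $r \times r$ block $\Sigma_r$ inside the $m \times n$ middle factor is precisely the implicit argument. The paper gives no separate proof, so your write-up simply makes explicit what the paper leaves to the reader.
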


For the ranks of quaternion matrices, we have the following theorem.    This theorem can be found on Page 295 of \cite{Xi82} and Page 35 of \cite{Zh06}.   Since these two references are not in English, we give a proof here for completeness.

\begin{Thm} \label{t2.3}
Suppose that $A \in {\mathbb Q}^{m \times r}$ and $B \in {\mathbb Q}^{r \times n}$.
Then
$${\rm rank}(AB) \le \min \{ {\rm rank}(A), {\rm rank}(B) \}.$$
In particular,
\begin{equation} \label{e4}
{\rm rank}(AB) \le r.
\end{equation}
\end{Thm}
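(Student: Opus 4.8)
\noindent\emph{Proof idea.} The plan is to identify $\rank(X)$ with the dimension of the \emph{right column space} of $X$ and then to use only elementary facts about vector spaces over the division ring ${\mathbb Q}$ together with the QSVD of Theorem \ref{t2.1}. For $X \in {\mathbb Q}^{p\times q}$, view ${\mathbb Q}^p$ and ${\mathbb Q}^q$ as right ${\mathbb Q}$-vector spaces and put $\C(X) = \{ Xv : v \in {\mathbb Q}^q \}$, which is a right ${\mathbb Q}$-subspace of ${\mathbb Q}^p$ because $X(vq_0) = (Xv)q_0$; such subspaces have a well-defined dimension over ${\mathbb Q}$, this dimension is monotone under inclusion, and left multiplication by an invertible matrix sends a right subspace to one of the same dimension. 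The first step is to prove $\dim \C(X) = \rank(X)$: writing $X = U\left({\Sigma_s \ O \atop O \ \ O}\right)V^*$ as in (\ref{e2}) with $s = \rank(X)$, invertibility of $V^*$ gives $\C(X) = U\cdot\left({\Sigma_s \ O \atop O \ \ O}\right){\mathbb Q}^q$, and since $\sigma_1,\dots,\sigma_s>0$ the set $\left({\Sigma_s \ O \atop O \ \ O}\right){\mathbb Q}^q$ is precisely the right span of the first $s$ coordinate vectors of ${\mathbb Q}^p$, of dimension $s$; the unitary (hence invertible) $U$ preserves that dimension.

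Granting this, $\rank(AB)\le\rank(A)$ is immediate: since $B{\mathbb Q}^n\subseteq{\mathbb Q}^r$ we get $\C(AB)=AB{\mathbb Q}^n\subseteq A{\mathbb Q}^r=\C(A)$, hence $\rank(AB)=\dim\C(AB)\le\dim\C(A)=\rank(A)$. For the bound $\rank(AB)\le\rank(B)$ I would pass to conjugate transposes. One checks $(AB)^*=B^*A^*$; moreover the conjugate transpose of (\ref{e2}) is again a QSVD with the same singular values (because $\Sigma_s$ is real diagonal and $(UMV^*)^*=VM^*U^*$ with $U^*,V$ still unitary), so $\rank(X^*)=\rank(X)$. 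Therefore $\rank(AB)=\rank((AB)^*)=\rank(B^*A^*)\le\rank(B^*)=\rank(B)$, the inequality being the previous case applied to $B^*\in{\mathbb Q}^{n\times r}$ and $A^*\in{\mathbb Q}^{r\times m}$. Combining, $\rank(AB)\le\min\{\rank(A),\rank(B)\}$; and since $A\in{\mathbb Q}^{m\times r}$, Corollary \ref{C2.2} gives $\rank(A)\le r$, which yields (\ref{e4}).

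The step I expect to be the real obstacle is the first one, where one must establish the equivalence between the QSVD-rank and $\dim\C(X)$ while keeping careful track of noncommutativity: everything has to be phrased inside right ${\mathbb Q}$-vector spaces, one relies on the (standard but worth stating) fact that dimension over a division ring is well behaved, and --- crucially --- the column/row duality must be set up through the conjugate transpose rather than the transpose, since $(AB)^\T=B^\T A^\T$ generally fails for quaternions whereas $(AB)^*=B^*A^*$ holds. A shortcut that sidesteps the division-ring bookkeeping would be to invoke the complex representation $X\mapsto X^C$ from \cite{WLZZ18}, which satisfies $(AB)^C=A^CB^C$ and $\rank(X^C)=2\,\rank(X)$ and thereby reduces the claim to the classical complex-matrix rank inequality; I would note this as an alternative but present the self-contained QSVD argument.
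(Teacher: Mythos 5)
Your proof is correct, but it follows a genuinely different route from the paper's. The paper splits the two inequalities the same way you do, but proves $\rank(AB)\le\rank(B)$ by a null-space argument ($\mathcal{N}(B)\subseteq\mathcal{N}(AB)$ combined with $\rank(C)=n-\dim\mathcal{N}(C)$) and proves $\rank(AB)\le\rank(A)$ by observing that the rows of $AB$ are $A_iB$ and that left linear independence of $A_1B,\dots,A_sB$ forces left linear independence of $A_1,\dots,A_s$; both steps quietly import the equivalence of the QSVD-defined rank with the nullity and row-independence characterizations from the cited (non-English) references. You instead establish a single characterization --- $\rank(X)$ equals the dimension of the right column space $X\,{\mathbb Q}^q$ --- directly from the QSVD of Theorem \ref{t2.1}, get $\rank(AB)\le\rank(A)$ from the inclusion $AB\,{\mathbb Q}^n\subseteq A\,{\mathbb Q}^r$, and transfer this to the other inequality via $(AB)^*=B^*A^*$ and $\rank(X^*)=\rank(X)$. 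Your version is more self-contained relative to the paper's definition of rank as the number of positive singular values (the only auxiliary input is that dimension over the division ring ${\mathbb Q}$ is well defined and monotone), and your insistence on the conjugate transpose rather than the transpose is exactly the right precaution, since $(AB)^\T=B^\T A^\T$ fails over ${\mathbb Q}$. The paper's argument is shorter if one is willing to take the standard quaternionic rank characterizations for granted. Your final step, $\rank(A)\le r$ via Corollary \ref{C2.2}, matches the paper's.
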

\begin{proof}
(1) We first show that ${\rm rank}(AB)\leq {\rm rank}(B)$.

For any $x\in  {\mathbb Q}^n$ satisfying  $Bx=0$, we have  $ABx=0$. This means that  $\mathcal{N}(B)\subseteq \mathcal{N}(AB)$, where $\mathcal{N}(C)$ denotes the null space of matrix $C$. Hence, ${\rm dim} \mathcal{N}(B)\leq {\rm dim} \mathcal{N}(AB)$ and  ${\rm rank}(AB)\leq {\rm rank}(B)$
since ${\rm rank}(C)=n-{\rm dim}\mathcal{N}(C)$.

 To show that ${\rm rank}(AB)\leq {\rm rank}(A)$. Let $A_1,A_2,\cdots, A_m$ be all rows of matrix $A$. Then  $A_1B, A_2B,\cdots, A_mB$ are all rows of matrix $AB$.
Let ${\rm rank}(AB)=r$. Without loss of generality, we assume that $A_1B,\dots, A_rB$ are left linearly independent. Now we show that $A_1,\dots, A_r$ are linearly independent by contradiction.

Assume that
there exist $k_1,
\dots, k_r\in \mathbb{Q}$ such that $k_1A_1+\dots+k_rA_r=0$. Then \[k_1A_1B+\dots+k_rA_rB=(k_1A_1+\dots+k_rA_r)B=0.\] From $A_1B,\dots, A_rB$ are left linearly independent \cite{WLZZ18}, we have
$k_1=\dots=k_r=0$ and hence $A_1,\dots, A_r$ are left linearly independent. This means that ${\rm rank}(A)\geq r$ since ${\rm rank}(A)$ is the maximum number of rows that are left linearly independent.   That is, ${\rm rank}(A)\geq {\rm rank}(AB)$.

In all, ${\rm rank}(AB)\leq {\rm min} \{{\rm rank}(A), {\rm rank}(B)\}$.

From the fact that ${\rm rank}(A)\leq r$, ${\rm rank}(AB)\leq r$ is clear.

\end{proof}

We use $O_{m \times n}$ to denote the zero matrix in ${\mathbb Q}^{m \times n}$.

\section{A LRQD Algorithm}

Suppose that we have a quaternion data matrix $D \in {\mathbb Q}^{m \times n}$, which is only partially observed.  Let $\Omega$ be the set of observed entries of $D$.
The low rank quaternion approximation (LRQA) model for color image inpaiting is as follows:
\begin{equation} \label{e5}
\min_{X \in {\mathbb Q}^{m \times n}} \left\{ {1 \over 2} \|(X-D)_{\Omega}\|_F^2 : {\rm rank}(X) = r \right\},
\end{equation}
where $r < \min \{ m, n \}$.

We derive a low rank quaternion decomposition theorem for a quaternion matrix $X$.

\begin{Thm}  \label{t3.1}
Suppose that $X \in {\mathbb Q}^{m \times n}$.  Let positive integer $r < \{ m, n \}$.   Then rank$(X) \le r$ if and only if there are  $A \in {\mathbb Q}^{m \times r}$ and $B \in {\mathbb Q}^{r \times n}$
such that $X = AB$.
\end{Thm}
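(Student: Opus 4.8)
The statement is an "iff" about the factorization $X = AB$ with inner dimension $r$, so I would prove the two directions separately, and both directions should follow quickly from results already established in the excerpt. The plan is to use Theorem~\ref{t2.3} for the "if" direction and the QSVD (Theorem~\ref{t2.1}) for the "only if" direction.

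For the "if" direction, suppose $X = AB$ with $A \in {\mathbb Q}^{m \times r}$ and $B \in {\mathbb Q}^{r \times n}$. Then Theorem~\ref{t2.3}, specifically the bound \eqref{e4}, gives ${\rm rank}(X) = {\rm rank}(AB) \le r$ immediately. This direction is essentially a one-line citation.

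For the "only if" direction, suppose ${\rm rank}(X) \le r$. Apply the QSVD from Theorem~\ref{t2.1}: write $X = U \left( {\Sigma_s \ O \atop O \ \ O} \right) V^*$ where $s = {\rm rank}(X) \le r$, $U \in {\mathbb Q}^{m \times m}$ and $V \in {\mathbb Q}^{n \times n}$ are unitary, and $\Sigma_s$ is the $s \times s$ diagonal matrix of positive singular values. The idea is to split the middle block-diagonal matrix into a product of an $m \times r$ matrix and an $r \times n$ matrix. Concretely, I would set $A$ to be $U$ times the $m \times r$ matrix whose top-left $s \times s$ block is $\Sigma_s$ and whose other entries are zero, and set $B$ to be the $r \times n$ matrix whose top-left $s \times s$ block is $I_s$ and whose other entries are zero, times $V^*$. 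Then a direct block multiplication check shows $AB = X$, using $s \le r \le \min\{m,n\}$ so that all the block shapes are consistent. (One could equivalently absorb $\Sigma_s$ into $B$ instead; either split works.)

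I do not anticipate a serious obstacle here; the only thing requiring care is bookkeeping of the block dimensions, i.e. verifying that padding the $s \times s$ data out to the $m \times r$ and $r \times n$ shapes is legitimate, which needs $s \le r$, $r < m$, and $r < n$ — all given by hypothesis together with Corollary~\ref{C2.2}. The noncommutativity of quaternions is not an issue because we never reorder factors; we only regroup an associative product. So the "hard part," such as it is, is simply writing the padded matrices and their product cleanly.
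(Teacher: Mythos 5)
Your proposal is correct and follows essentially the same route as the paper: the ``if'' direction is the same one-line appeal to Theorem \ref{t2.3}, and the ``only if'' direction is the same QSVD-based construction, differing only cosmetically in how the diagonal factor is split (you absorb $\Sigma_s$ entirely into $A$ and pad $B$ with an identity block, whereas the paper takes $A = U_1\Sigma^{1/2}$ and $B = \Sigma^{1/2}V_1$ with the $r\times r$ zero-padded diagonal). Both splits are valid; no gap.
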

\begin{proof}    If $X = AB$, $A \in {\mathbb Q}^{m \times r}$ and $B \in {\mathbb Q}^{r \times n}$, then by Theorem \ref{t2.3}, we have rank$(X) \le r$.

On the other hand, suppose that rank$(X) \le r$.   Then by Theorem \ref{t2.1}, we have (\ref{e2}).  Let
$$U =(U_1 \ U_2)$$
and
$$V^* = \left({V_1 \atop V_2}\right)$$
where $U_1 \in {\mathbb Q}^{m \times r}$, $U_2 \in {\mathbb Q}^{m \times (m-r)}$, $V_1 \in {\mathbb Q}^{r \times n}$ and $V_2 \in {\mathbb Q}^{(n-r) \times n}$.  Let $\Sigma$ = diag$\{ \sqrt{\sigma_1}, \cdots, \sqrt{\sigma_r}\}$, $A= U_1\Sigma$ and $B = \Sigma V_1$.   Then we have $X = AB$, $A \in {\mathbb Q}^{m \times r}$ and $B \in {\mathbb Q}^{r \times n}$.
\end{proof}

We now propose a LRQA model
\begin{equation} \label{e6}
\min_{A \in {\mathbb Q}^{m \times r}, B \in {\mathbb Q}^{r \times n}, X \in {\mathbb Q}^{m \times n}}  \left\{f(A, B, X) \equiv {1 \over 2} \|AB-X\|_F^2 : X_{\Omega} = D_{\Omega} \right\}.
\end{equation}
Here, $r < \min \{ m, n \}$, $X \in {\mathbb Q}^{m \times n}$ is a surrogate matrix.  We may use the following alternative scheme to find $A, B$ and $X$.

First, we have $A^{(0)} \in {\mathbb Q}^{m \times r}$ and $B^{(0)} \in {\mathbb Q}^{r \times n}$.

At the $k$th iteration, we first calculate $X^{(k)}$.  Let $X^{(k)}$ be the solution of
\begin{equation} \label{e6a}
\min_{X \in {\mathbb Q}^{m \times n}}  \left\{p(X) \equiv {1 \over 2} \|A^{(k)}B^{(k)}-X\|_F^2 : X_{\Omega} = D_{\Omega} \right\}.
\end{equation}
Then,
\begin{equation} \label{e7}
X^{(k)}_{\Omega} = D_{\Omega}, \ X^{(k)}_{\Omega_C} =\left(A^{(k)}B^{(k)}\right)_{\Omega_C},
\end{equation}
where $\Omega_C$ is the complement of $\Omega$.

We find $A^{(k+1)}$ as the solution of
\begin{equation} \label{e8a}
\min_{A \in {\mathbb Q}^{m \times r}} g(A) \equiv {1 \over 2}  \left\|AB^{(k)}-X^{(k)}\right\|_F^2 +  {\lambda \over 2}\left\|A - A^{(k)}\right\|^2_F,
\end{equation}
where $\lambda > 0$ is a regularization coefficient.

\begin{Prop} \label{pp3.2}
The least squares problem (\ref{e8a}) has an explicit solution
\begin{equation} \label{e8}
A^{(k+1)} = \left[X^{(k)}\left(B^{(k)}\right)^*+ \lambda A^{(k)}\right]\left[B^{(k)}\left(B^{(k)}\right)^*+ \lambda I_r\right]^{-1}.
\end{equation}
\end{Prop}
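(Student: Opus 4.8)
The plan is to treat the objective $g(A)$ in (\ref{e8a}) as a real-valued quadratic function of the quaternion matrix variable $A$ and find its unique minimizer by setting the gradient to zero. Since $g$ is a sum of two squared Frobenius norms, it is strongly convex in $A$ (the regularization term $\frac{\lambda}{2}\|A - A^{(k)}\|_F^2$ with $\lambda > 0$ guarantees coercivity and strict convexity), so a stationary point is the global minimizer. The main subtlety is that differentiation with respect to quaternion matrix variables is noncommutative, so I must be careful about the order of multiplication when computing the first-order variation.

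\textbf{First} I would expand $g(A)$ explicitly. Writing $\|AB^{(k)} - X^{(k)}\|_F^2 = \mathrm{tr}\big((AB^{(k)} - X^{(k)})(AB^{(k)} - X^{(k)})^*\big)$ and similarly for the penalty term, I obtain a quadratic expression in $A$. \textbf{Then} I would compute the directional derivative: for an arbitrary increment $H \in {\mathbb Q}^{m \times r}$, expand $g(A + tH)$ in powers of $t$ and collect the coefficient of the first-order term. This gives a linear functional of $H$ of the form $\langle \nabla g(A), H\rangle$ (real inner product), where the gradient works out to
$$\nabla g(A) = (AB^{(k)} - X^{(k)})(B^{(k)})^* + \lambda(A - A^{(k)}).$$
Setting $\nabla g(A^{(k+1)}) = 0$ yields
$$A^{(k+1)}B^{(k)}(B^{(k)})^* + \lambda A^{(k+1)} = X^{(k)}(B^{(k)})^* + \lambda A^{(k)},$$
i.e. $A^{(k+1)}\big[B^{(k)}(B^{(k)})^* + \lambda I_r\big] = X^{(k)}(B^{(k)})^* + \lambda A^{(k)}$.

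\textbf{To finish}, I would verify that $B^{(k)}(B^{(k)})^* + \lambda I_r$ is invertible: the matrix $B^{(k)}(B^{(k)})^*$ is Hermitian positive semidefinite (its real counterpart under the complex representation $(\cdot)^C$ has nonnegative eigenvalues), so adding $\lambda I_r$ with $\lambda > 0$ makes it Hermitian positive definite, hence invertible. Right-multiplying by the inverse gives exactly the claimed formula (\ref{e8}). \textbf{The main obstacle} I anticipate is handling the noncommutativity correctly in the variational computation — in particular, making sure that the cross terms $\mathrm{tr}(HB^{(k)}(B^{(k)})^*A^*)$ and $\mathrm{tr}(A B^{(k)}(B^{(k)})^* H^*)$ combine properly into a real inner product (using $\mathrm{tr}(M) + \mathrm{tr}(M^*) = 2\,\mathrm{Re}\,\mathrm{tr}(M)$ and the cyclic-up-to-conjugation properties of the quaternion trace) so that the gradient is unambiguous. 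Once the concise gradient formula of Section 4 is invoked, this is routine; indeed this Proposition is precisely the kind of statement that formula is designed to make transparent.
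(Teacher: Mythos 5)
Your proposal is correct and follows essentially the same route as the paper: compute the gradient of the quadratic objective, obtaining $\nabla g(A) = (AB^{(k)}-X^{(k)})(B^{(k)})^* + \lambda(A-A^{(k)})$, set it to zero, and right-multiply by the inverse of the positive definite matrix $B^{(k)}(B^{(k)})^*+\lambda I_r$. The only cosmetic difference is that you reach the gradient via a directional-derivative/trace expansion, whereas the paper reaches it via the componentwise real partial derivatives of Definition \ref{d4.1} and Theorem \ref{t2.4}; your added remarks on strong convexity and Hermitian positive definiteness make explicit what the paper leaves implicit.
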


 We will prove this proposition in the next section.

We find $B^{(k+1)}$ as the solution of
\begin{equation} \label{e9a}
\min_{B \in {\mathbb Q}^{r \times n}} h(B) \equiv {1 \over 2} \left\|A^{(k+1)}B-X^{(k)}\right\|_F^2 + {\lambda \over 2}\left\|B-B^{(k)}\right\|_F^2,
\end{equation}
where $\lambda > 0$.
Similarly, we have the following proposition.

\begin{Prop} \label{pp3.3}
The least squares problem (\ref{e9a}) has an explicit solution
\begin{equation} \label{e9}
B^{(k+1)} = \left[\left(A^{(k+1)}\right)^*A^{(k+1)}+ \lambda I_r\right]^{-1}\left[\left(A^{(k+1)}\right)^*X^{(k)} + \lambda B^{(k)}\right].
\end{equation}
\end{Prop}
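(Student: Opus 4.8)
The plan is to solve the unconstrained quaternion least squares problem (\ref{e9a}) by setting the gradient of $h(B)$ with respect to the quaternion matrix variable $B$ to zero, using the concise gradient formalism that will be introduced in Section~4. This mirrors the argument for Proposition~\ref{pp3.2}, so the two proofs will be presented together (or Proposition~\ref{pp3.3} will be deduced by a symmetry/transpose argument from Proposition~\ref{pp3.2}). First I would note that $h$ is a real-valued function that is, after expanding the Frobenius norms via $\|M\|_F^2 = \operatorname{Re}\operatorname{tr}(M^*M)$, a quadratic form in the real components $B_0, B_1, B_2, B_3$ of $B$ with a positive definite Hessian, so it is strictly convex and coercive; hence it has a unique minimizer characterized by the vanishing of its gradient.

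Next I would compute that gradient. Expanding,
\[
h(B) = \tfrac12\left\|A^{(k+1)}B - X^{(k)}\right\|_F^2 + \tfrac{\lambda}{2}\left\|B - B^{(k)}\right\|_F^2,
\]
and applying the gradient rules for the two model terms (the derivative of $\tfrac12\|AB-X\|_F^2$ in $B$ being $A^*(AB-X)$, and the derivative of $\tfrac{\lambda}{2}\|B-B^{(k)}\|_F^2$ being $\lambda(B-B^{(k)})$ — both of which are among the ``simple expressions'' the paper promises to establish in Section~4), the stationarity condition reads
\[
\left(A^{(k+1)}\right)^*\left(A^{(k+1)}B - X^{(k)}\right) + \lambda\left(B - B^{(k)}\right) = O.
\]
Collecting the terms involving $B$ on the left gives
\[
\left[\left(A^{(k+1)}\right)^*A^{(k+1)} + \lambda I_r\right]B = \left(A^{(k+1)}\right)^*X^{(k)} + \lambda B^{(k)}.
\]
Finally I would observe that the $r\times r$ quaternion matrix $\left(A^{(k+1)}\right)^*A^{(k+1)} + \lambda I_r$ is Hermitian positive definite (since $\left(A^{(k+1)}\right)^*A^{(k+1)}$ is Hermitian positive semidefinite and $\lambda > 0$), hence invertible, and left-multiplying by its inverse yields exactly formula (\ref{e9}), with uniqueness following from strict convexity.

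The main obstacle is purely a matter of setup rather than of genuine difficulty: one must be careful that the gradient calculus for real functions of quaternion matrix variables, together with the noncommutativity of quaternion multiplication, really does produce $\left(A^{(k+1)}\right)^*$ acting on the \emph{left} of the residual (and not $A^{(k+1)}$ or a transpose), so that the resulting normal equations have the stated form and the coefficient matrix sits on the left of $B$. Once the gradient formulas of Section~4 are in hand this is routine; the only other point requiring a line of justification is the invertibility of $\left(A^{(k+1)}\right)^*A^{(k+1)} + \lambda I_r$, which follows because for any nonzero $x \in {\mathbb Q}^r$ we have $x^*\left[\left(A^{(k+1)}\right)^*A^{(k+1)} + \lambda I_r\right]x = \|A^{(k+1)}x\|_F^2 + \lambda\|x\|_F^2 > 0$.
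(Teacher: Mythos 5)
Your proposal is correct and follows essentially the same route as the paper: the authors prove Proposition \ref{pp3.3} by applying the gradient formula of Theorem \ref{t2.5} (namely $\nabla f(B) = A^*(AB+C)$), setting $\nabla h(B) = \left(A^{(k+1)}\right)^*\left(A^{(k+1)}B - X^{(k)}\right) + \lambda\left(B - B^{(k)}\right) = O$, and inverting the coefficient matrix, which is invertible because $\lambda > 0$. Your added remarks on strict convexity and the positive-definiteness argument for invertibility are slightly more detailed than the paper's but change nothing essential.
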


We thus have the following algorithm to solve (\ref{e6}).




\begin{algorithm} \label{LRQD}
\caption{A LRQD algorithm for solving (\ref{e6}).}\label{lrqd}
\begin{algorithmic}[1]
  \STATE Given $\epsilon > 0$, $A^{(0)} \in {\mathbb Q}^{m \times r}$ and $B^{(0)} \in {\mathbb Q}^{r \times n}$.  Set $k\gets 0$.
      \STATE Use (\ref{e7}) to find $X^{(k)}$.
      \STATE Use (\ref{e8}) to find $A^{(k+1)}$.
      \STATE Use (\ref{e9}) to find $B^{(k+1)}$.
      \STATE If $A^{(k+1)}= A^{(k)}$ and $B^{(k+1)} = B^{(k)}$, stop.  Otherwise, $k\gets k+1$ and goto Step 2.
\end{algorithmic}
\end{algorithm}

\section{The Gradient of A Real Function in Quaternion Matrix Variables}

To conduct convergence analysis for our algorithm, we need to consider optimization problems of real functions in quaternion matrix variables.   To handle this, we introduce a concise form for the gradient of a real function in quaternion matrix variables.

Consider the following optimization problem
\begin{equation} \label{n13}
\min \{ f(X) : X \in {\mathbb Q}^{m \times n}, g_k(X) = 0, k = 1, \cdots, p \},
\end{equation}
where $f, g_k: {\mathbb Q}^{m \times n} \to {\mathbb R}$ for $k = 1, \cdots, p$.    We need to have a form for $\nabla f(X)$ and $\nabla g_k(X)$ for $k = 1, \cdots, p$.

\begin{Def} \label{d4.1}
Let $f : {\mathbb Q}^{m \times n} \to {\mathbb R}$.   Let $X = X_0 + X_1\ii +X_2\jj + X_3\kk$, where $X_0, X_1, X_2, X_3 \in {\mathbb R}^{m \times n}$.  Then we say that $f$ is differentiable at $X$ if ${\partial f \over \partial X_i}$ exists at $X_i$ for $i=0, 1, 2, 3$, and we denote
\begin{equation} \label{n14}
\nabla f(X) = {\partial f \over \partial X_0} + {\partial f \over \partial X_1}\ii + {\partial f \over \partial X_2}\jj + {\partial f \over \partial X_3}\kk.
\end{equation}
If ${\partial f \over \partial X_i}$ exists in a neighborhood of $X_i$, and is continuous at $X_i$, for $i=0, 1, 2, 3$, then we say that $f$ is continuously differentiable at $X$.    If $f$ is continuously differentiable for any $X \in {\mathbb Q}^{m \times n}$, then we say that $f$ is continuously differentiable.
\end{Def}

If $f$ have more variables, then we may change $\nabla f(X)$ in (\ref{n14}) to $\partial f \over \partial X$.

This form is different from the generalized HR calculus studied in \cite{XJTM15}, which is similar to the approach in optimization of real functions with complex variables \cite{SBD12}.

Based from this definition, we have the following theorem.

\begin{Thm} \label{nt4.2}
Suppose that $f, g_k: {\mathbb Q}^{m \times n} \to {\mathbb R}$ for $k = 1, \cdots, p$, are continuously differentiable, and $X^{\#} \in {\mathbb Q}^{m \times n}$ is an optimal solution of (\ref{n13}).   Then there are Langrangian multipliers $\lambda_k \in {\mathbb R}$ for $k = 1, \cdots, p$, such that
$$\nabla f(X^{\#}) + \sum_{j=1}^p \lambda_k \nabla g_k(X^{\#}) = O.$$
\end{Thm}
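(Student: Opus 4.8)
The plan is to reduce the quaternion-constrained optimization problem (\ref{n13}) to an equivalent real-variable constrained optimization problem and then invoke the classical Lagrange multiplier theorem over $\mathbb{R}$. The key observation is that a quaternion matrix $X \in \mathbb{Q}^{m \times n}$ is nothing but a quadruple $(X_0, X_1, X_2, X_3) \in \mathbb{R}^{m \times n} \times \mathbb{R}^{m \times n} \times \mathbb{R}^{m \times n} \times \mathbb{R}^{m \times n} \cong \mathbb{R}^{4mn}$, and under this identification $f$ and each $g_k$ become ordinary real-valued functions $\tilde f, \tilde g_k$ of the real variable $(X_0, X_1, X_2, X_3)$. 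By Definition \ref{d4.1}, the hypothesis that $f$ and $g_k$ are continuously differentiable in the quaternion sense is exactly the statement that $\tilde f$ and $\tilde g_k$ are $C^1$ in the ordinary real sense, with partial derivatives with respect to the blocks $X_0, X_1, X_2, X_3$.

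First I would set up this identification carefully and note that $X^{\#}$ corresponds to a point $(X_0^{\#}, X_1^{\#}, X_2^{\#}, X_3^{\#})$ that is a local minimizer of $\tilde f$ subject to $\tilde g_k = 0$, $k = 1, \dots, p$. Then I would apply the standard Lagrange multiplier theorem (or, to avoid a constraint qualification hypothesis, the Fritz John form, since the statement as written does not assume linear independence of the constraint gradients — though note the theorem as stated omits a multiplier on $\nabla f$, so implicitly a constraint qualification is being assumed; I would either add that hypothesis or remark that $\lambda_0 = 1$ can be taken under the usual regularity assumption). This yields real multipliers $\lambda_1, \dots, \lambda_p \in \mathbb{R}$ such that, for each block index $i \in \{0,1,2,3\}$,
\begin{equation} \label{realKKT}
{\partial \tilde f \over \partial X_i}(X^{\#}) + \sum_{k=1}^p \lambda_k {\partial \tilde g_k \over \partial X_i}(X^{\#}) = O.
\end{equation}

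The final step is purely bookkeeping: multiply the equation (\ref{realKKT}) for $i=1$ by $\ii$ on the right, the one for $i=2$ by $\jj$, the one for $i=3$ by $\kk$, and add all four to the $i=0$ equation. By the definition (\ref{n14}) of $\nabla f(X^{\#})$ and $\nabla g_k(X^{\#})$, the left-hand side becomes exactly $\nabla f(X^{\#}) + \sum_{k=1}^p \lambda_k \nabla g_k(X^{\#})$, and the right-hand side is $O$, which is the desired conclusion. Since the $\lambda_k$ are real scalars, they commute past the imaginary units, so this recombination is legitimate.

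I do not anticipate a serious obstacle here; the result is essentially a change of notation wrapping the classical multiplier rule. The only genuine subtlety — and the point I would be most careful about — is the constraint qualification: the statement writes $\nabla f(X^{\#}) + \sum \lambda_k \nabla g_k(X^{\#}) = O$ with no multiplier in front of $\nabla f$, which is only valid under a regularity condition such as the real gradients $\{\partial \tilde g_k / \partial(X_0,X_1,X_2,X_3)\}_{k=1}^p$ being linearly independent at $X^{\#}$ (this is automatically satisfied in the application (\ref{e6}), where the single constraint $X_\Omega = D_\Omega$ is affine with linearly independent component gradients). I would therefore state that assumption explicitly, or alternatively present the Fritz John version and then specialize. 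Everything else is a routine transcription between $\mathbb{Q}^{m \times n}$ and $\mathbb{R}^{4mn}$.
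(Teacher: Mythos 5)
Your proposal is correct and follows essentially the same route as the paper: the paper's own proof is a three-line reduction to real matrix variables via $X = X_0 + X_1\ii + X_2\jj + X_3\kk$ followed by an appeal to ``optimization theory and Definition \ref{d4.1},'' and you simply carry out that reduction and recombination explicitly. Your remark about the missing constraint qualification (the theorem as stated puts coefficient $1$ on $\nabla f(X^{\#})$ without assuming regularity of the constraint gradients) identifies a real imprecision in the paper's statement that its proof silently glosses over, and is worth keeping.
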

\begin{proof}
Let $X = X_0 + X_1\ii +X_2\jj + X_3\kk$, where $X_0, X_1, X_2, X_3 \in {\mathbb R}^{m \times n}$.  Then (\ref{n13}) is converted to an optimization problem with real matrix variables.  By optimization theory and Definition \ref{d4.1}, the conclusion holds.
\end{proof}

This theorem can be extended to other optimization problems involving continuously differentiable real functions in quaternion matrix variables.   The Langrangian multipliers $\lambda_k$ are real numbers.

With Definition \ref{d4.1}, the gradients of some functions useful for our model have simple expressions.

\begin{Thm} \label{t2.4}
Suppose that $f : {\mathbb Q}^{m \times r}$ be defined by $f(X) = {1 \over 2}\|XB + C\|_F^2$, where $B \in {\mathbb Q}^{r \times n}$ and $C \in {\mathbb Q}^{m \times n}$.   Then
$$\nabla f(X) = (XB + C)B^*.$$
\end{Thm}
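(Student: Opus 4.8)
The plan is to reduce everything to ordinary real matrix calculus via Definition~\ref{d4.1}, using the trace identity $\|M\|_F^2 = {\rm tr}(M^*M)$, which is a real number since $(M^*M)_{jj} = \sum_i |M_{ij}|^2$. First I would set $M = XB + C \in {\mathbb Q}^{m\times n}$, so that $f(X) = \frac{1}{2}{\rm tr}(M^*M)$ is a quadratic polynomial in the real matrices $X_0,X_1,X_2,X_3$ and hence smooth. Perturbing $X$ by an arbitrary quaternion increment $dX = dX_0 + dX_1\ii + dX_2\jj + dX_3\kk$ gives $dM = (dX)B$, and since $d\,{\rm tr}(M^*M) = {\rm tr}\big((dM)^*M\big) + {\rm tr}\big(M^*dM\big)$ while ${\rm tr}\big((dM)^*M\big) = {\rm tr}\big((M^*dM)^*\big) = \overline{{\rm tr}(M^*dM)}$ (using ${\rm tr}(A^*) = \overline{{\rm tr}(A)}$), I obtain
\[
df = {\rm Re}\,{\rm tr}\!\big(M^*(dX)B\big).
\]

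Next I would use the fact that the real part of the trace is cyclic, ${\rm Re}\,{\rm tr}(PQ) = {\rm Re}\,{\rm tr}(QP)$ for quaternion matrices of compatible sizes (a consequence of ${\rm Re}(pq) = {\rm Re}(qp)$ for $p,q \in {\mathbb Q}$), to rewrite $df = {\rm Re}\,{\rm tr}(Q\,dX)$ with $Q := BM^* \in {\mathbb Q}^{r\times m}$. Writing $Q = Q_0 + Q_1\ii + Q_2\jj + Q_3\kk$ with $Q_0,\dots,Q_3 \in {\mathbb R}^{r\times m}$, I would isolate the coefficient of each real increment by multiplying out $Q\,dX_0$, $Q\,(dX_1\ii)$, $Q\,(dX_2\jj)$, $Q\,(dX_3\kk)$ using $\ii^2 = \jj^2 = \kk^2 = -1$, $\jj\ii = -\kk$, $\kk\ii = \jj$, $\ii\jj = \kk$, $\kk\jj = -\ii$, $\ii\kk = -\jj$, $\jj\kk = \ii$, and then taking real parts. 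Since ${\rm Re}\,{\rm tr}(Q_i\,dX_i) = {\rm tr}(Q_i\,dX_i) = \langle Q_i^\T, dX_i\rangle$ is the real Frobenius pairing, this identifies, by Definition~\ref{d4.1},
\[
\frac{\partial f}{\partial X_0} = Q_0^\T, \qquad \frac{\partial f}{\partial X_i} = -Q_i^\T \quad (i = 1,2,3).
\]

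Finally, assembling these via (\ref{n14}),
\[
\nabla f(X) = Q_0^\T - Q_1^\T\ii - Q_2^\T\jj - Q_3^\T\kk = \overline{Q^\T} = Q^* = (BM^*)^* = MB^* = (XB+C)B^*,
\]
which is the desired identity. The delicate part --- and where sign errors are easiest to make --- is the middle step: one must track the noncommutative products $\ii\jj$, $\jj\ii$, $\kk\ii$, etc.\ carefully when extracting real parts, and must not drop the transpose in the identification ${\rm tr}(Q_i\,dX_i) = \langle Q_i^\T, dX_i\rangle$; the rest is routine bookkeeping. (Once this theorem is available, Propositions~\ref{pp3.2} and \ref{pp3.3} follow from it together with Theorem~\ref{nt4.2} applied to the quadratic objectives $g$ and $h$.)
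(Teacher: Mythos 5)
Your proof is correct, but it takes a genuinely different route from the paper. The paper's proof is a direct component computation: it expands $XB+C=M_0+M_1\ii+M_2\jj+M_3\kk$ explicitly in terms of the real parts $X_iB_j$, writes $f=\frac12\sum_i\|M_i\|_F^2$, differentiates each term to get four formulas for $\partial f/\partial X_i$, and then reassembles them into $(XB+C)B^*$. You instead work with the differential of $\mathrm{tr}(M^*M)$, using $\mathrm{tr}(A^*)=\overline{\mathrm{tr}(A)}$ to reduce $df$ to $\mathrm{Re}\,\mathrm{tr}(M^*(dX)B)$ and then the cyclic invariance of the real part of the quaternion trace (valid because $\mathrm{Re}(pq)=\mathrm{Re}(qp)$) to read off the gradient as $Q^*$ with $Q=BM^*$. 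All the ingredients you invoke check out: $\mathrm{tr}(M^*M)$ is real, $(M^*dM)^*=(dM)^*M$, $\mathrm{Re}\,\mathrm{tr}(Q\,dX)=\mathrm{tr}(Q_0\,dX_0)-\sum_{i=1}^3\mathrm{tr}(Q_i\,dX_i)$, and $Q_0^\T-Q_1^\T\ii-Q_2^\T\jj-Q_3^\T\kk=Q^*$. What your approach buys is economy and reuse: it avoids the eight sign-sensitive product expansions the paper carries out, it delivers Theorem~\ref{t2.5} by the symmetric computation $df=\mathrm{Re}\,\mathrm{tr}\bigl((AX+C)^*A\,dX\bigr)$ with no extra work, and it makes clear why the answer has the same shape as in the real and complex cases. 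What the paper's route buys is that it is entirely elementary --- no trace identities, only real partial derivatives applied verbatim to Definition~\ref{d4.1} --- which is arguably closer in spirit to how that definition is meant to be used. The one point to be explicit about if you write this up fully is the quaternionic cyclic-trace identity $\mathrm{Re}\,\mathrm{tr}(PQ)=\mathrm{Re}\,\mathrm{tr}(QP)$, since the full trace $\mathrm{tr}(PQ)$ is \emph{not} cyclic over $\mathbb{Q}$; you do flag and justify this, so there is no gap.
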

\begin{proof}  We have $XB + C= M_0+ M_1\ii + M_2\jj+M_3\kk$,
where
$$M_0 = X_0B_0-X_1B_1-X_2B_2-X_3B_3+C_0,$$
$$M_1 = X_0B_1+X_1B_0+X_2B_3-X_3B_2+C_1,$$
$$M_2 = X_0B_2+X_2B_0+X_1B_3-X_3B_1+C_2,$$
$$M_3 = X_0B_3+X_3B_0+X_1B_2-X_2B_1+C_3.$$
Then,
$$f(X) = {1 \over 2}\sum_{i=0}^3 \|M_i\|_F^2,$$
$${\partial f \over \partial X_0} = M_0B_0^\T + M_1B_1^\T + M_2B_2^\T + M_3B_3^\T,$$
$${\partial f \over \partial X_1} = -M_0B_1^\T + M_1B_0^\T + M_2B_3^\T + M_3B_2^\T,$$
$${\partial f \over \partial X_2} = -M_0B_2^\T + M_1B_3^\T + M_2B_0^\T - M_3B_1^\T,$$
$${\partial f \over \partial X_3} = -M_0B_3^\T - M_1B_2^\T - M_2B_1^\T + M_3B_0^\T.$$
Thus, we have
$$\nabla f(X) = {\partial f \over \partial X_0} + {\partial f \over \partial X_1}\ii + {\partial f \over \partial X_2}\jj + {\partial f \over \partial X_3}\kk = (XB+C)(B_0-B_1^\T\ii-B_2^\T\jj-B_3^\T\kk) = (XB+C)B^*.$$
\end{proof}

We now prove Proposition \ref{pp3.2}.   For
$$g(A) \equiv {1 \over 2}  \left\|AB^{(k)}-X^{(k)}\right\|_F^2 +  {\lambda \over 2}\left\|A - A^{(k)}\right\|^2_F,$$ with a similar argument as the proof of Theorem \ref{t2.4},
we have
$$\nabla g(A) = \left(AB^{(k)} -X^{(k)}\right)\left(B^{(k)}\right)^* + \lambda \left(A - A^{(k)}\right).$$
Then the optimality condition of (\ref{e8a}) is
$$\nabla g(A) = O_{m \times r}.$$
We have
$$AB^{(k)}\left(B^{(k)}\right)^* + \lambda A = X^{(k)}\left(B^{(k)}\right)^*  + \lambda A^{(k)},$$
i.e.,
$$A\left[B^{(k)}\left(B^{(k)}\right)^* + \lambda I_r\right] = X^{(k)}\left(B^{(k)}\right)^*  + \lambda A^{(k)},$$
Since $\lambda > 0$, the inverse of $B^{(k)}\left(B^{(k)}\right)^* + \lambda I_r$ exists.
From this, we see that (\ref{e8}) is the solution of (\ref{e8a}).

Similarly, we have the following theorem.

\begin{Thm} \label{t2.5}
Suppose that $f : {\mathbb Q}^{r \times n}$ be defined by $f(X) = {1 \over 2}\|AX + C\|_F^2$, where $A \in {\mathbb Q}^{m \times r}$ and $C \in {\mathbb Q}^{m \times n}$.   Then
$$\nabla f(X) = A^*(AX + C).$$
\end{Thm}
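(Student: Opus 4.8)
The plan is to imitate the proof of Theorem \ref{t2.4}, with left multiplication by $A$ now playing the role that right multiplication by $B$ played there. First I would write $X = X_0 + X_1\ii + X_2\jj + X_3\kk$, $A = A_0 + A_1\ii + A_2\jj + A_3\kk$ and $C = C_0 + C_1\ii + C_2\jj + C_3\kk$ with all components real matrices, and expand the quaternion product to obtain $AX + C = N_0 + N_1\ii + N_2\jj + N_3\kk$, where
\begin{align*}
N_0 &= A_0X_0 - A_1X_1 - A_2X_2 - A_3X_3 + C_0,\\
N_1 &= A_0X_1 + A_1X_0 + A_2X_3 - A_3X_2 + C_1,\\
N_2 &= A_0X_2 + A_2X_0 + A_3X_1 - A_1X_3 + C_2,\\
N_3 &= A_0X_3 + A_3X_0 + A_1X_2 - A_2X_1 + C_3,
\end{align*}
so that $f(X) = \frac12\sum_{i=0}^3 \|N_i\|_F^2$.

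Next I would differentiate. Each $N_i$ contains exactly one term in which a given variable block $X_j$ appears, and that term has the form $(\pm A_l)X_j$; hence $\dfrac{\partial}{\partial X_j}\,\frac12\|N_i\|_F^2 = (\pm A_l)^\T N_i$. Collecting the four contributions gives, for each $j$, an expression for $\dfrac{\partial f}{\partial X_j}$ as a signed sum of the matrices $A_l^\T N_i$; for instance $\dfrac{\partial f}{\partial X_0} = A_0^\T N_0 + A_1^\T N_1 + A_2^\T N_2 + A_3^\T N_3$, and $\dfrac{\partial f}{\partial X_1}$, $\dfrac{\partial f}{\partial X_2}$, $\dfrac{\partial f}{\partial X_3}$ follow the same bookkeeping. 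I would then assemble $\nabla f(X) = \dfrac{\partial f}{\partial X_0} + \dfrac{\partial f}{\partial X_1}\ii + \dfrac{\partial f}{\partial X_2}\jj + \dfrac{\partial f}{\partial X_3}\kk$ according to Definition \ref{d4.1}, separately expand the candidate $A^*(AX+C) = (A_0^\T - A_1^\T\ii - A_2^\T\jj - A_3^\T\kk)(N_0 + N_1\ii + N_2\jj + N_3\kk)$ using the multiplication rules for $\ii,\jj,\kk$, and match the real-, $\ii$-, $\jj$- and $\kk$-components of the two quaternion matrices.

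The main obstacle here is purely organizational: because $A$ multiplies on the left, the index pattern in $N_1,N_2,N_3$ is the mirror image of the one appearing in the proof of Theorem \ref{t2.4}, so one must resist transcribing that proof verbatim and instead recompute the expansion of $AX+C$ carefully. An alternative route that sidesteps the long expansion is to use the conjugate transpose: since $\|AX+C\|_F = \|(AX+C)^*\|_F = \|X^*A^* + C^*\|_F$, Theorem \ref{t2.4} applied to the map $Y \mapsto \frac12\|YA^* + C^*\|_F^2$ at $Y = X^*$ yields gradient $(X^*A^*+C^*)A = (AX+C)^*A$; then, writing $X^* = X_0^\T - X_1^\T\ii - X_2^\T\jj - X_3^\T\kk$ and applying the chain rule through the substitution $X \mapsto X^*$ shows that the gradient with respect to $X$ is the conjugate transpose of the gradient with respect to $Y$, i.e. $\big((AX+C)^*A\big)^* = A^*(AX+C)$. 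I would present the direct componentwise computation as the proof and mention this shortcut as a remark.
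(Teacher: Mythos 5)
Your proposal is correct and follows exactly the route the paper intends: the paper omits the proof of Theorem \ref{t2.5}, saying only that it is obtained ``similarly'' to Theorem \ref{t2.4}, and your componentwise expansion of $AX+C$ (with the correctly mirrored index pattern in $N_1,N_2,N_3$) is precisely that similar computation. The conjugate-transpose shortcut you mention as a remark is also valid and is a tidy way to deduce Theorem \ref{t2.5} directly from Theorem \ref{t2.4}.
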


We also can prove Proposition \ref{pp3.3} similarly.

In our convergence analysis, the {K}urdyka-{\L}ojasiewicz property \cite{AB09, ABRS10, BDL07} plays a critical role.   Since we regard functions in this paper as functions defined on an abstract vector space with real coefficients, the {K}urdyka-{\L}ojasiewicz property also holds for functions related with the optimization problems studied in this paper.

\section{Convergence Analysis}

In this section, we present convergence analysis for the LRQD algorithm.  As stated in Subsection 2.3, we may regard the objective function $f$ in (\ref{e6}) as a function defined in the abstract vector space of dimension $4(mr+rn+mn)$ with real coefficients.   Then we use the gradient of $f$, and study the stationary points and the first order optimality conditions of (\ref{e6})
without ambiguity.

In the following, we always denote ${\bf Z} \equiv (A, B, X)$.   Thus, ${\bf Z}^{(k)} \equiv (A^{(k)}, B^{(k)}, X^{(k)})$, ${\bf Z}^{\#} \equiv (A^{\#}, B^{\#}, X^{\#})$, so on.

We have the following theorem.

\begin{Thm}
Let $A^{\#} \in {\mathbb Q}^{m \times r}, B^{\#} \in {\mathbb Q}^{r \times n}$ and $X^{\#} \in {\mathbb Q}^{m \times n}$.   Suppose that $X^{\#}$ satisfies
\begin{equation} \label{e10}
X^{\#}_{\Omega} = D_{\Omega},
\end{equation}
i.e., $X^{\#}$ is a feasible point of (\ref{e6}).   If ${\bf Z}^{\#} \equiv \left(A^{\#}, B^{\#}, X^{\#}\right)$ is an optimal solution of (\ref{e6}), then we have
\begin{equation} \label{e11}
\left(A^{\#}B^{\#}-X^{\#}\right)\left(B^{\#}\right)^{*} = O_{m \times r},
\end{equation}
\begin{equation} \label{e12}
\left(A^{\#}\right)^{*}\left(A^{\#}B^{\#}-X^{\#}\right) = O_{r \times n},
\end{equation}
\begin{equation} \label{e13}
X^{\#}_{\Omega_C} = (A^{\#}B^{\#})_{\Omega_C},
\end{equation}
i.e., ${\bf Z}^{\#} \equiv \left(A^{\#}, B^{\#}, X^{\#}\right)$ is a stationary point of (\ref{e6}).
\end{Thm}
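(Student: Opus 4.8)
The plan is to exploit the block structure of (\ref{e6}): an optimal triple ${\bf Z}^{\#}=(A^{\#},B^{\#},X^{\#})$ must in particular minimize $f(A,B,X)=\frac12\|AB-X\|_F^2$ in each of the three block variables while the other two are frozen at their optimal values. Each of the resulting restricted problems has already been analyzed above, so the three stationarity identities (\ref{e11})--(\ref{e13}) will follow at once from the concise gradient formulas of Theorems \ref{t2.4} and \ref{t2.5}, together with the first-order optimality statement for real functions of quaternion matrix variables encoded in Definition \ref{d4.1} and Theorem \ref{nt4.2}.

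First I would dispose of the $A$-block. The feasible set of (\ref{e6}) places no restriction on $A$, so $A^{\#}$ is an unconstrained minimizer over ${\mathbb Q}^{m\times r}$ of $\phi(A):=\frac12\|AB^{\#}-X^{\#}\|_F^2=\frac12\|AB^{\#}+(-X^{\#})\|_F^2$. Applying Theorem \ref{t2.4} with $B=B^{\#}$ and $C=-X^{\#}$ gives $\nabla\phi(A)=(AB^{\#}-X^{\#})(B^{\#})^{*}$; since $\phi$ is continuously differentiable, passing to its real coordinates and reassembling the partials via (\ref{n14}) as in the proof of Theorem \ref{nt4.2} forces $\nabla\phi(A^{\#})=O_{m\times r}$ at the unconstrained minimizer, which is exactly (\ref{e11}). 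The $B$-block is symmetric: $B^{\#}$ is an unconstrained minimizer over ${\mathbb Q}^{r\times n}$ of $\psi(B):=\frac12\|A^{\#}B-X^{\#}\|_F^2$, and Theorem \ref{t2.5} (with $A=A^{\#}$, $C=-X^{\#}$) yields $(A^{\#})^{*}(A^{\#}B^{\#}-X^{\#})=O_{r\times n}$, i.e. (\ref{e12}).

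For the $X$-block, $X^{\#}$ minimizes $\pi(X):=\frac12\|A^{\#}B^{\#}-X\|_F^2$ subject to $X_{\Omega}=D_{\Omega}$; this is precisely the subproblem (\ref{e6a}) with $A^{(k)}B^{(k)}$ replaced by $A^{\#}B^{\#}$. Because $\|A^{\#}B^{\#}-X\|_F^2$ decouples entrywise, its minimizer is characterized exactly as in (\ref{e7}): on $\Omega$ the constraint fixes $X^{\#}_{\Omega}=D_{\Omega}$ (the feasibility hypothesis (\ref{e10})), while on $\Omega_C$ the entries are free, so the corresponding components of $\nabla\pi(X)=X-A^{\#}B^{\#}$ must vanish, giving $X^{\#}_{\Omega_C}=(A^{\#}B^{\#})_{\Omega_C}$, which is (\ref{e13}). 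Equivalently, one may apply Theorem \ref{nt4.2} directly with the real and imaginary parts of $\{X_{ij}-D_{ij}:(i,j)\in\Omega\}$ as constraint functions: none of them involves $A$ or $B$ and each touches a single entry of $X$, so the Lagrangian multiplier terms absorb $\nabla\pi$ on $\Omega$ but leave it equal to $O$ on $\Omega_C$, again producing (\ref{e13}). Combining (\ref{e10}) with (\ref{e11})--(\ref{e13}) shows that ${\bf Z}^{\#}$ is a stationary point of (\ref{e6}).

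I do not expect a genuine obstacle here; the only point demanding care is the reduction from ``joint minimizer of $f$'' to ``blockwise first-order stationarity expressed through the gradient of Definition \ref{d4.1}''. One must verify that differentiating each restricted real function in the relevant $4mr$, $4rn$, or $4mn$ real coordinates and then repackaging the partials into a quaternion matrix via (\ref{n14}) really does reproduce the compact expressions $(AB-X)B^{*}$, $A^{*}(AB-X)$, and $X-AB$ — but this is exactly the computation already performed in the proof of Theorem \ref{t2.4}, so what remains is routine bookkeeping.
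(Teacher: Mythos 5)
Your proof is correct and follows essentially the same route as the paper: both reduce the claim to the gradient formulas of Theorems \ref{t2.4} and \ref{t2.5} together with the first-order optimality conditions of Theorem \ref{nt4.2}. The only cosmetic difference is that you obtain (\ref{e11})--(\ref{e13}) from blockwise minimality of the three restricted subproblems (with the $\Omega_C$-entries of $X$ handled by entrywise decoupling), whereas the paper writes the joint Lagrangian stationarity conditions with explicit multipliers $\pi_{ij}$ for the constraints $X_\Omega = D_\Omega$; both yield the same equations.
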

\begin{proof}  By Theorem \ref{nt4.2}, ${\bf Z}^{\#} \equiv \left(A^{\#}, B^{\#}, X^{\#}\right)$ should satisfy
$$\begin{array}{rl}
{\partial \over \partial A}f(A^{\#}, B^{\#}, X^{\#}) & = O_{m \times r}, \\
{\partial \over \partial B}f(A^{\#}, B^{\#}, X^{\#}) & = O_{r \times n}, \\
\left({\partial \over \partial X}f(A^{\#}, B^{\#}, X^{\#})\right)_{\Omega_C} & = \left(O_{m \times n}\right)_{\Omega_C} \\
\left({\partial \over \partial X}f(A^{\#}, B^{\#}, X^{\#})\right)_{ij} & = -\pi_{ij}, \ \ {\rm for}\ (i, j) \in \Omega,
\end{array}$$
where $\pi_{ij}$ for $(i, j) \in \Omega$ are Langrangian multipliers.

By Theorem \ref{t2.4},
$${\partial \over \partial A}f(A^{\#}, B^{\#}, X^{\#}) = \left(A^{\#}B^{\#}-X^{\#}\right)\left(B^{\#}\right)^{*}.$$
We have (\ref{e11}).

By Theorem \ref{t2.5},
$${\partial \over \partial B}f(A^{\#}, B^{\#}, X^{\#}) =
\left(A^{\#}\right)^{*}\left(A^{\#}B^{\#}-X^{\#}\right).$$
We have (\ref{e12}).

By Theorems \ref{t2.4} or \ref{t2.5},
$${\partial \over \partial X}f(A^{\#}, B^{\#}, X^{\#}) =
X^{\#}- A^{\#}B^{\#}.$$
We have (\ref{e13}) and
$$\pi(ij) = \left(A^{\#}B^{\#}-X^{\#}\right)_{ij}$$
for $(i, j) \in \Omega$.   In (\ref{e6}), $X_\Omega = D_\Omega$ is a quaternion matrix equality.   If we convert it to real equalities, there are $4|\Omega|$ real equalities, i.e., (\ref{e6}) can be regarded an optimization problem in the form (\ref{e13}) with $4|\Omega|$ real equality constraints.  Then $\pi(i, j)_l$ for $l = 0, 1, 2, 3$, i.e., the coefficients of the quaternion number $\pi(i, j)$, are corresponding real Langrangian multipliers in Theorem \ref{nt4.2}.   This implies that ${\bf Z}^{\#} \equiv \left(A^{\#}, B^{\#}, X^{\#}\right)$ is a stationary point of (\ref{e6}), in the sense of Theorem \ref{nt4.2}.

The theorem is proved.
\end{proof}

We now consider the case that the LRQD algorithm stops in a finite number of iterations.

\begin{Prop} \label{p4.2}
If Algorithm 1 stops in a finite number of iterations, then ${\bf Z}^{\#} \equiv \left(A^{\#}, B^{\#}, X^{\#}\right) \equiv \left(A^{(k)}, B^{(k)}, X^{(k)}\right)$ is a stationary point of (\ref{e6}).
\end{Prop}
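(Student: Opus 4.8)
The plan is to unpack what "Algorithm 1 stops in a finite number of iterations" means and then verify the three stationarity conditions \eqref{e11}, \eqref{e12}, \eqref{e13} directly from the update formulas. By the stopping criterion in Step 5, if the algorithm halts at iteration $k$ then $A^{(k+1)} = A^{(k)}$ and $B^{(k+1)} = B^{(k)}$; we set $A^{\#} = A^{(k)}$, $B^{\#} = B^{(k)}$, $X^{\#} = X^{(k)}$. First I would record that $X^{\#}$ is feasible for \eqref{e6}: indeed \eqref{e7} gives $X^{(k)}_{\Omega} = D_{\Omega}$, so $X^{\#}_{\Omega} = D_{\Omega}$ holds automatically, and moreover \eqref{e7} also gives $X^{(k)}_{\Omega_C} = (A^{(k)}B^{(k)})_{\Omega_C} = (A^{\#}B^{\#})_{\Omega_C}$, which is exactly \eqref{e13}.

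Next I would handle \eqref{e11}. Since $A^{(k+1)}$ solves the unconstrained least squares problem \eqref{e8a}, its optimality condition (already derived just after Proposition \ref{pp3.2} using Theorem \ref{t2.4}) is $\nabla g(A^{(k+1)}) = \left(A^{(k+1)}B^{(k)} - X^{(k)}\right)\left(B^{(k)}\right)^{*} + \lambda\left(A^{(k+1)} - A^{(k)}\right) = O_{m \times r}$. Using $A^{(k+1)} = A^{(k)} = A^{\#}$ and $B^{(k)} = B^{\#}$, the proximal term $\lambda(A^{(k+1)} - A^{(k)})$ vanishes, leaving $\left(A^{\#}B^{\#} - X^{\#}\right)\left(B^{\#}\right)^{*} = O_{m \times r}$, which is \eqref{e11}. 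Symmetrically, $B^{(k+1)}$ solves \eqref{e9a}, so by Theorem \ref{t2.5} its optimality condition is $\left(A^{(k+1)}\right)^{*}\left(A^{(k+1)}B^{(k+1)} - X^{(k)}\right) + \lambda\left(B^{(k+1)} - B^{(k)}\right) = O_{r \times n}$; substituting $A^{(k+1)} = A^{\#}$, $B^{(k+1)} = B^{(k)} = B^{\#}$, $X^{(k)} = X^{\#}$ and dropping the vanishing proximal term yields \eqref{e12}.

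Having established \eqref{e11}, \eqref{e12}, \eqref{e13} together with feasibility \eqref{e10}, I would invoke the previous theorem (or more precisely its proof, which shows these three equations are exactly the stationarity conditions of \eqref{e6} in the sense of Theorem \ref{nt4.2}, with the Lagrange multipliers on the constraint entries given by $\pi_{ij} = (A^{\#}B^{\#} - X^{\#})_{ij}$ for $(i,j) \in \Omega$) to conclude that ${\bf Z}^{\#}$ is a stationary point of \eqref{e6}. The argument is almost entirely bookkeeping, so there is no serious obstacle; the one point requiring a little care is making sure the proximal regularization terms in \eqref{e8a} and \eqref{e9a} genuinely disappear — this is precisely why the stopping test compares consecutive iterates rather than, say, monitoring the objective value — and making sure that the $X$-update \eqref{e7}, although it is a constrained problem, contributes the $\Omega_C$ part of stationarity correctly (it does, since on $\Omega_C$ the solution equals $(A^{\#}B^{\#})_{\Omega_C}$, matching $\partial f/\partial X = X^{\#} - A^{\#}B^{\#} = O$ there).
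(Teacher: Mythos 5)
Your proposal is correct and follows essentially the same route as the paper: use the stopping test $A^{(k+1)}=A^{(k)}$, $B^{(k+1)}=B^{(k)}$ to kill the proximal terms in the normal equations of \eqref{e8a} and \eqref{e9a} (equivalently, substitute into the closed forms \eqref{e8} and \eqref{e9}), and read off \eqref{e10} and \eqref{e13} from \eqref{e7}. In fact you attach the conclusions to the right updates — the $A$-step yields \eqref{e11} and the $B$-step yields \eqref{e12} — whereas the paper's proof has these two labels transposed, an evident typo.
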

\begin{proof}
If Algorithm 1 stops in a finite number of iterations, then $A^{(k+1)} = A^{(k)}$ and $B^{(k+1)} = B^{(k)}$.   Denote $A^{\#} = A^{(k)}$, $B^{\#} = B^{(k)}$ and $X^{\#} = X^{(k)}$.   By (\ref{e7}), we have (\ref{e10}) and (\ref{e13}).  By (\ref{e8}), we have
$$A^{\#}\left(B^{\#}(B^{\#})^*+\lambda I_r\right) = X^{\#}(B^{\#})^* + \lambda A^{\#}.$$
This leads to (\ref{e12}).  By (\ref{e9}), we have
$$\left((A^{\#})(A^{\#}+\lambda I_r\right)B^{\#} = (A^{\#})^* X^{\#}+ \lambda B^{\#}.$$
This leads to (\ref{e11}).   These imply that ${\bf Z}^{\#} \equiv \left(A^{\#}, B^{\#}, X^{\#}\right) \equiv \left(A^{(k)}, B^{(k)}, X^{(k)}\right)$ is a stationary point of (\ref{e6}).
\end{proof}

We then consider the case that the LRQD algorithm generates an infinite sequence \\
$\left\{ {\bf Z}^{(k)} \equiv \left(A^{(k)}, B^{(k)}, X^{(k)}\right) : k = 0, 1, 2, \cdots \right\}$.

\begin{Lem} \label{l4.3}
Suppose that $\left\{ {\bf Z}^{(k)} \equiv \left(A^{(k)}, B^{(k)}, X^{(k)} \right): k = 0, 1, 2, \cdots \right\}$ is the sequence generated by the LRQD algorithm.   Then
$$\begin{array}{rl}
& f\left(A^{(k)}, B^{(k)}, X^{(k))}\right) - f\left(A^{(k+1)}, B^{(k+1)}, X^{(k+1)}\right) \\
\ge & {\lambda_0 \over 2} \left[\|A^{(k)}-A^{(k+1)}\|_F^2 + \|B^{(k)}-B^{(k+1)}\|_F^2 + \|X^{(k)}-X^{(k+1)}\|_F^2\right],
\end{array}$$
where $\lambda_0 = \min \{ \lambda, 1 \}$.
\end{Lem}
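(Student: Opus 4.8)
The plan is to track the change in $f$ across one full iteration by splitting it into three successive sub-steps --- the $X$-update, the $A$-update, and the $B$-update --- and to lower-bound each contribution separately. Concretely, I would write
$$
f\left(A^{(k)}, B^{(k)}, X^{(k)}\right) - f\left(A^{(k+1)}, B^{(k+1)}, X^{(k+1)}\right) = \Delta_X + \Delta_A + \Delta_B,
$$
where $\Delta_X = f(A^{(k)}, B^{(k)}, X^{(k)}) - f(A^{(k)}, B^{(k)}, X^{(k+1)})$ --- wait, this needs care because in the algorithm $X^{(k)}$ is computed \emph{before} $A^{(k+1)}, B^{(k+1)}$. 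So the correct bookkeeping is: $X^{(k)}$ is obtained from $(A^{(k)}, B^{(k)})$, then $A^{(k+1)}$ from $(B^{(k)}, X^{(k)})$, then $B^{(k+1)}$ from $(A^{(k+1)}, X^{(k)})$. Thus I would instead telescope as
$$
\Delta_A = f\left(A^{(k)}, B^{(k)}, X^{(k)}\right) - f\left(A^{(k+1)}, B^{(k)}, X^{(k)}\right), \quad \Delta_B = f\left(A^{(k+1)}, B^{(k)}, X^{(k)}\right) - f\left(A^{(k+1)}, B^{(k+1)}, X^{(k)}\right),
$$
$$
\Delta_X = f\left(A^{(k+1)}, B^{(k+1)}, X^{(k)}\right) - f\left(A^{(k+1)}, B^{(k+1)}, X^{(k+1)}\right),
$$
and these three sum to the left-hand side.

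For $\Delta_A$: by definition $g(A) = f(A, B^{(k)}, X^{(k)}) + \frac{\lambda}{2}\|A - A^{(k)}\|_F^2$, and $A^{(k+1)}$ minimizes $g$. The key observation is that $g$ is a quadratic in $A$ (in the real sense, via Definition~\ref{d4.1}) whose Hessian in the penalty direction is at least $\lambda I$; more precisely, $g(A^{(k)}) - g(A^{(k+1)}) \ge \frac{\lambda}{2}\|A^{(k)} - A^{(k+1)}\|_F^2$ because $g$ is strongly convex with modulus $\lambda$ in $A$ (the term $\frac12\|AB^{(k)} - X^{(k)}\|_F^2$ is convex, and $\frac{\lambda}{2}\|A - A^{(k)}\|_F^2$ supplies the strong convexity). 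Since $g(A^{(k)}) = f(A^{(k)}, B^{(k)}, X^{(k)})$ and $g(A^{(k+1)}) = f(A^{(k+1)}, B^{(k)}, X^{(k)}) + \frac{\lambda}{2}\|A^{(k+1)} - A^{(k)}\|_F^2$, this rearranges to $\Delta_A \ge \lambda\|A^{(k)} - A^{(k+1)}\|_F^2 \ge \frac{\lambda}{2}\|A^{(k)} - A^{(k+1)}\|_F^2$. The same argument with $h(B)$ gives $\Delta_B \ge \frac{\lambda}{2}\|B^{(k)} - B^{(k+1)}\|_F^2$.

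For $\Delta_X$: here $X^{(k+1)}$ minimizes $p(X) = \frac12\|A^{(k+1)}B^{(k+1)} - X\|_F^2$ over the affine set $\{X : X_\Omega = D_\Omega\}$, and $X^{(k)}$ also lies in that set. Since $p$ is strongly convex with modulus $1$ (no regularization is needed --- the identity Hessian comes for free from $\|\cdot\|_F^2$), the standard inequality for the minimizer of a strongly convex function over a convex set yields $p(X^{(k)}) - p(X^{(k+1)}) \ge \frac12\|X^{(k)} - X^{(k+1)}\|_F^2$, i.e. $\Delta_X \ge \frac12\|X^{(k)} - X^{(k+1)}\|_F^2$. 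Summing the three bounds and using $\lambda_0 = \min\{\lambda, 1\}$ gives exactly the claimed estimate.

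The main obstacle --- more a matter of care than of difficulty --- is justifying the strong-convexity inequalities in the quaternion setting. The cleanest route is to invoke Subsection~2.3 and Definition~\ref{d4.1}: each of $g$, $h$, $p$ becomes, after writing the quaternion variable in its four real components, a genuine real quadratic function, and then the elementary fact ``if $q$ is real quadratic with $q(y) - q(y^*) \ge \frac{\mu}{2}\|y - y^*\|^2$ for its minimizer $y^*$'' applies verbatim. One should double-check that the Frobenius norm $\|\cdot\|_F$ on quaternion matrices coincides with the Euclidean norm on the stacked real components (it does, by the definition of $|x|$), so the moduli $\lambda$ and $1$ transfer without distortion. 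With that in hand the proof is a three-line telescoping argument plus the quadratic lemma applied three times.
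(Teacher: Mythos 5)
Your proposal is correct and follows essentially the same route as the paper: the identical three-term telescoping over the $A$-, $B$-, and $X$-updates, with each term bounded below by the corresponding squared step length. The only cosmetic difference is that you invoke $\lambda$-strong convexity of $g$ and $h$ (and $1$-strong convexity of $p$ over the affine feasible set), whereas the paper obtains the same bounds more cheaply from the exact identity $f(A^{(k)},B^{(k)},X^{(k)})-f(A^{(k+1)},B^{(k)},X^{(k)})=g(A^{(k)})-g(A^{(k+1)})+\frac{\lambda}{2}\|A^{(k+1)}-A^{(k)}\|_F^2$ together with mere minimality $g(A^{(k)})\ge g(A^{(k+1)})$, and from a direct computation via (\ref{e7}) for the $X$-step.
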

\begin{proof}  First, we have
$$\begin{array}{rl}
& f\left(A^{(k)}, B^{(k)}, X^{(k)}\right) - f\left(A^{(k+1)}, B^{(k)}, X^{(k)}\right) \\
= & {1 \over 2}\left\|A^{(k)}B^{(k)}-X^{(k)}\right\|_F^2 - {1 \over 2}\left\|A^{(k+1)}B^{(k)}-X^{(k)}\right\|_F^2 \\
= & g(A^{(k)}) - g(A^{(k+1)}) + {\lambda \over 2}\left\|A^{(k+1)}-A^{(k)}\right\|_F^2\\
\ge & {\lambda \over 2}\left\|A^{(k+1)}-A^{(k)}\right\|_F^2.
\end{array}$$
Here, $g$ is the objective function of (\ref{e8a}).   Since $A^{(k+1)}$ minimizes (\ref{e8a}), we have
$g(A^{(k)}) \ge g(A^{(k+1})$.  This leads to the last inequality.

Similarly, we have
$$\begin{array}{rl}
& f\left(A^{(k+1)}, B^{(k)}, X^{(k)}\right) - f\left(A^{(k+1)}, B^{(k+1)}, X^{(k)}\right) \\
= & {1 \over 2}\left\|A^{(k+1)}B^{(k)}-X^{(k)}\right\|_F^2 - {1 \over 2}\left\|A^{(k+1)}B^{(k+1)}-X^{(k)}\right\|_F^2 \\
= & h(B^{(k)}) - h(B^{(k+1)}) + {\lambda \over 2}\left\|B^{(k+1)}-B^{(k)}\right\|_F^2\\
\ge & {\lambda \over 2}\left\|B^{(k+1)}-B^{(k)}\right\|_F^2.
\end{array}$$
Here, $h$ is the objective function of (\ref{e9a}).   Since $B^{(k+1)}$ minimizes (\ref{e9a}), we have
$h(B^{(k)}) \ge h(B^{(k+1})$.  This leads to the last inequality.

Finally, we have
$$\begin{array}{rl}
& f\left(A^{(k+1)}, B^{(k+1)}, X^{(k)}\right) - f\left(A^{(k+1)}, B^{(k+1)}, X^{(k+1)}\right) \\
= & {1 \over 2}\left\|A^{(k+1)}B^{(k+1)}-X^{(k)}\right\|_F^2 - {1 \over 2}\left\|A^{(k+1)}B^{(k+1)}-X^{(k+1)}\right\|_F^2 \\
= & {1\over 2}\left\|(A^{(k+1)}B^{(k+1)}-X^{(k)})_\Omega\right\|_F^2+\left\|(A^{(k+1)}B^{(k+1)}-X^{(k)})_{\Omega_C}\right\|_F^2 \\&- {1 \over 2}\left\|(A^{(k+1)}B^{(k+1)}-X^{(k+1)})_\Omega\right\|_F^2 - {1 \over 2}\left\|(A^{(k+1)}B^{(k+1)}-X^{(k+1)})_{\Omega_C}\right\|_F^2 \\
= & {1 \over 2}\left\|(A^{(k+1)}B^{(k+1)}-X^{(k)})_{\Omega_C}\right\|_F^2\\
=& {1 \over 2}\left\|(X^{(k+1)}-X^{(k)})_{\Omega_C}\right\|_F^2\\
=& {1\over 2} \|X^{(k)}-X^{(k+1)}\|_F^2.
\end{array}$$
Summing up these three parts, we have the conclusion.
\end{proof}

\begin{Lem} \label{l4.4}
Suppose that $\left\{ {\bf Z}^{(k)} \equiv \left(A^{(k)}, B^{(k)}, X^{(k)} \right): k = 0, 1, 2, \cdots \right\}$ is the infinite sequence generated by the LRQD algorithm.   Then
$$\sum_{k=0}^\infty \left[\left\|A^{(k)} - A^{(k+1)}\right\|_F^2 + \left\|B^{(k)} - B^{(k+1)}\right\|_F^2 + \left\|X^{(k)} - X^{(k+1)}\right\|_F^2\right] < \infty,$$
$$\lim_{k \to \infty} \left[A^{(k)} - A^{(k+1)}\right] = O_{m,r},$$
$$\lim_{k \to \infty} \left[B^{(k)} - B^{(k+1)}\right] = O_{r,n}$$
and
$$\lim_{k \to \infty} \left[X^{(k)} - X^{(k+1)}\right] = O_{m,n}.$$
\end{Lem}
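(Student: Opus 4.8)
The plan is to turn the sufficient-decrease inequality of Lemma \ref{l4.3} into a summable-increments statement by a standard telescoping argument, using the obvious fact that $f$ is bounded below. First I would note that $f(A,B,X) = \frac12\|AB-X\|_F^2 \ge 0$ for every $(A,B,X)$, so the real sequence $\{f({\bf Z}^{(k)})\}$ is bounded below by $0$. Since $\lambda_0 = \min\{\lambda,1\} > 0$ (recall $\lambda > 0$ in the algorithm), the right-hand side of the inequality in Lemma \ref{l4.3} is nonnegative, hence $\{f({\bf Z}^{(k)})\}$ is nonincreasing; being monotone and bounded below, it converges to some limit $f^{\#} \ge 0$.

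Next I would sum the inequality of Lemma \ref{l4.3} over $k = 0, 1, \dots, N$. The left-hand side telescopes, giving
$$\frac{\lambda_0}{2}\sum_{k=0}^N \left[\|A^{(k)}-A^{(k+1)}\|_F^2 + \|B^{(k)}-B^{(k+1)}\|_F^2 + \|X^{(k)}-X^{(k+1)}\|_F^2\right] \le f({\bf Z}^{(0)}) - f({\bf Z}^{(N+1)}) \le f({\bf Z}^{(0)}).$$
Letting $N \to \infty$ and dividing by $\lambda_0/2 > 0$ yields the claimed summability, in fact with the explicit bound $\frac{2}{\lambda_0}\,f({\bf Z}^{(0)})$.

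Finally, the terms of a convergent series of nonnegative numbers tend to $0$; since each of $\|A^{(k)}-A^{(k+1)}\|_F^2$, $\|B^{(k)}-B^{(k+1)}\|_F^2$, $\|X^{(k)}-X^{(k+1)}\|_F^2$ is a nonnegative summand dominated by the bracketed quantity appearing in the series, each of them tends to $0$, and hence so do $\|A^{(k)}-A^{(k+1)}\|_F$, $\|B^{(k)}-B^{(k+1)}\|_F$, $\|X^{(k)}-X^{(k+1)}\|_F$. This gives the three limits $A^{(k)}-A^{(k+1)} \to O_{m,r}$, $B^{(k)}-B^{(k+1)} \to O_{r,n}$ and $X^{(k)}-X^{(k+1)} \to O_{m,n}$.

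I do not expect a genuine obstacle here: this is the standard consequence of a sufficient-decrease property. The only points that need care are the nonnegativity of $f$ (immediate from its definition as half a squared Frobenius norm) and the strict positivity of $\lambda_0$ (which holds because $\lambda > 0$ is assumed in the algorithm); both are routine.
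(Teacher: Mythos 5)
Your proposal is correct and follows essentially the same route as the paper: sum the sufficient-decrease inequality of Lemma \ref{l4.3}, telescope, bound the sum by $\frac{2}{\lambda_0}f({\bf Z}^{(0)})$ using $f \ge 0$, and conclude that the summands tend to zero. Your write-up is in fact slightly more careful than the paper's, which momentarily writes $\frac{2}{\lambda}$ instead of $\frac{2}{\lambda_0}$ before correcting itself in the summed inequality.
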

\begin{proof} By Lemma \ref{l4.3}, for $k = 0, 1, 2, \cdots$, we have
$$\begin{array}{rl}
& \left\|A^{(k)} - A^{(k+1)}\right\|_F^2 + \left\|B^{(k)} - B^{(k+1)}\right\|_F^2 + \left\|X^{(k)} - X^{(k+1)}\right\|_F^2 \\
\le & {2 \over \lambda}\left[f\left(A^{(k)}, B^{(k)}, X^{(k)}\right) - f\left(A^{(k+1)}, B^{(k+1)}, X^{(k+1)}\right)\right].
\end{array}$$
Summarizing with respect to $k$, we have
$$\begin{array}{rl}
& \sum_{k=0}^\infty \left[\left\|A^{(k)} - A^{(k+1)}\right\|_F^2 + \left\|B^{(k)} - B^{(k+1)}\right\|_F^2 + \left\|X^{(k)} - X^{(k+1)}\right\|_F^2\right] \\
\le & \sum_{k=0}^\infty {2 \over \lambda_0}\left[f\left(A^{(k)}, B^{(k)}, X^{(k)}\right) - f\left(A^{(k+1)}, B^{(k+1)}, X^{(k+1)}\right)\right] \\
\le & {2 \over \lambda_0}f\left(A^{(0)}, B^{(0)}, X^{(0))}\right) < \infty.
\end{array}$$
Hence, $\left\|A^{(k)} - A^{(k+1)}\right\|_F^2 \to 0$, $\left\|B^{(k)} - B^{(k+1)}\right\|_F^2 \to 0$ and
$\left\|X^{(k)} - X^{(k+1)}\right\|_F^2 \to 0$.   The second conclusion of the lemma follows.
\end{proof}

\begin{Thm}
Suppose that $\left\{ {\bf Z}^{(k)} \equiv \left(A^{(k)}, B^{(k)}, X^{(k)} \right) : k = 0, 1, 2, \cdots \right\}$ is the sequence generated by the LRQD algorithm and it is bounded.   Then every limiting point of this sequence is a stationary point of (\ref{e6}).
\end{Thm}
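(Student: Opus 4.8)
The plan is to take a convergent subsequence of the bounded iterate sequence and pass to the limit in the three update formulas (\ref{e7}), (\ref{e8}) and (\ref{e9}). Regarding the ambient space as the real vector space of dimension $4(mr+rn+mn)$, boundedness together with the Bolzano--Weierstrass theorem ensures that any limiting point $\mathbf{Z}^{\#} \equiv (A^{\#}, B^{\#}, X^{\#})$ is the limit of some subsequence $\{\mathbf{Z}^{(k_j)}\}$. The first step is to upgrade this to convergence of the \emph{shifted} subsequence: by Lemma \ref{l4.4} we have $A^{(k)} - A^{(k+1)} \to O_{m,r}$, $B^{(k)} - B^{(k+1)} \to O_{r,n}$ and $X^{(k)} - X^{(k+1)} \to O_{m,n}$, so the differences $\mathbf{Z}^{(k_j+1)} - \mathbf{Z}^{(k_j)}$ tend to zero and hence $A^{(k_j+1)} \to A^{\#}$, $B^{(k_j+1)} \to B^{\#}$, $X^{(k_j+1)} \to X^{\#}$ as well. (If the algorithm instead terminates after finitely many iterations, the conclusion is already contained in Proposition \ref{p4.2}, so we may assume the sequence is infinite and Lemma \ref{l4.4} applies.)

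Next I would read off the stationarity conditions one at a time by letting $j \to \infty$. Evaluating (\ref{e7}) at $k = k_j$ gives $X^{(k_j)}_{\Omega} = D_{\Omega}$ and $X^{(k_j)}_{\Omega_C} = (A^{(k_j)}B^{(k_j)})_{\Omega_C}$; passing to the limit and using continuity of quaternion matrix multiplication and of the coordinate projections onto $\Omega$ and $\Omega_C$ yields the feasibility relation (\ref{e10}) and equation (\ref{e13}). Writing (\ref{e8}) at $k = k_j$ in the cleared form $A^{(k_j+1)}[B^{(k_j)}(B^{(k_j)})^* + \lambda I_r] = X^{(k_j)}(B^{(k_j)})^* + \lambda A^{(k_j)}$ and taking limits gives $A^{\#}[B^{\#}(B^{\#})^* + \lambda I_r] = X^{\#}(B^{\#})^* + \lambda A^{\#}$; cancelling $\lambda A^{\#}$ and factoring on the right produces exactly (\ref{e11}). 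Likewise, writing (\ref{e9}) at $k = k_j$ as $[(A^{(k_j+1)})^*A^{(k_j+1)} + \lambda I_r]B^{(k_j+1)} = (A^{(k_j+1)})^*X^{(k_j)} + \lambda B^{(k_j)}$ and letting $j \to \infty$ gives (\ref{e12}) after cancelling $\lambda B^{\#}$. Since (\ref{e10})--(\ref{e13}) are precisely the first-order optimality (stationarity) conditions of (\ref{e6}) established at the beginning of this section by means of Theorem \ref{nt4.2} and the gradient formulas of Theorems \ref{t2.4} and \ref{t2.5}, it follows that $\mathbf{Z}^{\#}$ is a stationary point of (\ref{e6}).

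The one delicate point is the index shift between $k_j$ and $k_j+1$: a priori the subsequence $\{\mathbf{Z}^{(k_j+1)}\}$ need not converge at all, and it is only Lemma \ref{l4.4} that forces it to share the limit $\mathbf{Z}^{\#}$, which is what makes it legitimate to pass to the limit in both index slots appearing in (\ref{e8}) and (\ref{e9}). Everything else is continuity of polynomial maps --- matrix product, conjugate transpose, and the projections $(\cdot)_{\Omega}$, $(\cdot)_{\Omega_C}$ --- on a finite-dimensional real space; in particular, because we cleared the denominators in (\ref{e8}) and (\ref{e9}), we never have to take the limit of a matrix inverse, and the invertibility of $B^{(k)}(B^{(k)})^* + \lambda I_r$ and $(A^{(k+1)})^*A^{(k+1)} + \lambda I_r$ (their eigenvalues are at least $\lambda > 0$) is needed only to know the iterates were well defined in the first place. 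The boundedness hypothesis enters only to produce the convergent subsequence, and it is precisely at this point that one would invoke the Kurdyka--{\L}ojasiewicz property mentioned earlier if full convergence of the entire sequence were the goal.
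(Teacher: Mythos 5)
Your proposal is correct and follows essentially the same route as the paper: extract a convergent subsequence by boundedness, use Lemma \ref{l4.4} to force the index-shifted subsequence to the same limit, and pass to the limit in the cleared forms of (\ref{e7})--(\ref{e9}) to recover the stationarity conditions as in Proposition \ref{p4.2}. Your explicit handling of the index shift and the avoidance of limits of matrix inverses simply fills in details the paper leaves implicit.
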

\begin{proof}  Since $\left\{ \left(A^{(k)}, B^{(k)}, X^{(k)} \right) : k = 0, 1, 2, \cdots \right\}$ is bounded, it must have a subsequence \\
$\left\{ \left(A^{(k_i)}, B^{(k_i)}, X^{(k_i)}\right) : i = 0, 1, 2, \cdots \right\}$ that converges to a limiting point $\left(A^{\#}, B^{\#}, X^{\#}\right)$.   By Lemma \ref{l4.4}, the subsequence
$\left\{ \left(A^{(k_i+1)}, B^{(k_i+1)}, X^{(k_i+1)}\right) : i = 0, 1, 2, \cdots \right\}$ converges to the same limiting point.

In (\ref{e7}), (\ref{e8}) and (\ref{e9}), replace $k$ by $k_i$ and let $i \to \infty$.   Then, with an argument similar to the proof of Proposition \ref{p4.2}, we conclude that $\left(A^{\#}, B^{\#}, X^{\#}\right)$ is a stationary point of (\ref{e6}).
\end{proof}

Consider optimization problem (\ref{e6}).   As we regard the objective function $f$ as a function defined on an abstract vector space with real coefficients, it is a semi-algebraic function in the sense of \cite{AB09, ABRS10, BDL07}.    Then, for any critical point $\left(A^{\#}, B^{\#}, X^{\#}\right)$ of $f$, there are a neighborhood $N$ of this critical point, an exponent $\theta \in [{1 \over 2}, 1)$ and a positive constant $\mu$ such that the {K}urdyka-{\L}ojasiewicz inequality \cite{AB09, ABRS10, BDL07} below holds, i.e., for any $(A, B, X) \in N$, we have
\begin{equation} \label{e16}
\left| f(A, B, X) - f\left(A^{\#}, B^{\#}, X^{\#}\right)\right|^\theta \le \mu \left\| \Pi_{\Sigma}(\nabla f(A, B, X))\right\|_F,
\end{equation}
where $\Sigma$ is the feasible set of (\ref{e6}), $\Pi_{\Sigma}(\nabla f)$ is the projected gradient of $f$ with respect to $\Sigma$.   Then, we have
\begin{equation} \label{e17}
\Pi_{\Sigma}(\nabla f(A, B, X)) = \left(\begin{array}{l} (AB-X)B^* \\ A^*(AB-X) \\ (X-AB)_{\Omega_C} \end{array}\right).
\end{equation}

We may further confirm (\ref{e16}) by the following argument in the real field.   Let
$$\begin{array}{rl}
A & = A_0 + A_1\ii + A_2\jj + A_3\kk, \\
B & = B_0 + B_1\ii + B_2\jj + B_3\kk, \\
X & = X_0 + X_1\ii + X_2\jj + X_3\kk, \\
D & = D_0 + D_1\ii + D_2\jj + D_3\kk, \\
{\bf W} & = (A_0, A_1, A_2, A_3, B_0, B_1, B_2, B_3, X_0, X_1, X_2, X_3),
\end{array}$$
where $A_i \in {\mathbb R}^{m \times r}, B_i \in {\mathbb R}^{r \times n}$, $X_i \in {\mathbb R}^{m \times n}$ and $D_i \in {\mathbb R}^{m \times n}$ for $i = 0, 1, 2, 3$.   Define $\phi({\bf W}) \equiv f(A, B, X)$.   Then
$$\begin{array}{rl} \phi({\bf W})  = & {1 \over 2} \left\|(A_0B_0 -A_1B_1-A_2B_2-A_3B_3-X_0) + (A_0B_1 +A_1B_0+A_2B_3-A_3B_2-X_1)\ii \right. \\
 & \left. + (A_0B_2 +A_2B_0+A_3B_1-A_1B_3-X_2)\jj + (A_0B_3 +A_3B_0+A_1B_2-A_2B_1-X_3)\kk \right\|_F^2\\
 = & {1 \over 2} \left[ \|A_0B_0 -A_1B_1-A_2B_2-A_3B_3-X_0\|_F^2 + \|A_0B_1 +A_1B_0+A_2B_3-A_3B_2-X_1\|_F^2\right. \\
 & \left. + \|A_0B_2 +A_2B_0+A_3B_1-A_1B_3-X_2\|_F^2 + \|A_0B_3 +A_3B_0+A_1B_2-A_2B_1-X_3\|_F^2\right].
\end{array}$$
This shows that $\phi$ is a semi-algebraic function.   The constraints of (\ref{e6}) in the real field are
$(X_i)_{\Omega} = (D_i)_{\Omega}$ for $i = 0, 1, 2, 3$.   Applying the {K}urdyka-{\L}ojasiewicz inequality in the real field \cite{AB09, ABRS10, BDL07}, we may still obtain (\ref{e16}).

In the following, we use (\ref{e16}) to show that the infinite sequence generated by the LRQD algorithm converges to a stationary point.   We first to prove two lemmas.

\begin{Lem} \label{l4.6}
Suppose that $\left\{ {\bf Z}^{(k)} \equiv \left(A^{(k)}, B^{(k)}, X^{(k)} \right) : k = 0, 1, 2, \cdots \right\}$ is the sequence generated by the LRQD algorithm and it is bounded.   Then there is a positive constant $\eta$ such that for $k = 1, 2, 3, \cdots$,
$$\left\|\left(A^{(k)}, B^{(k)}, X^{(k)}\right) - \left(A^{(k+1)}, B^{(k+1)}, X^{(k+1)}\right)\right\|_F \ge \eta \left\| \Pi_{\Sigma}(\nabla f(A^{(k)}, B^{(k)}, X^{(k)}))\right\|_F.$$
\end{Lem}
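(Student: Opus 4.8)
The plan is to bound $\|\Pi_{\Sigma}(\nabla f(A^{(k)},B^{(k)},X^{(k)}))\|_F$ componentwise using the update formulas, exploiting the fact that at $(A^{(k+1)},B^{(k+1)},X^{(k+1)})$ the corresponding gradient blocks vanish (by the optimality of the subproblems), and then using the boundedness of the sequence to control the resulting differences. Recall from \eqref{e17} that $\Pi_{\Sigma}(\nabla f)$ has three blocks: $(AB-X)B^*$, $A^*(AB-X)$, and $(X-AB)_{\Omega_C}$. I will estimate each block at the point $(A^{(k)},B^{(k)},X^{(k)})$.

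First I would handle the third block. By \eqref{e7}, $X^{(k)}_{\Omega_C}=(A^{(k)}B^{(k)})_{\Omega_C}$, so $(X^{(k)}-A^{(k)}B^{(k)})_{\Omega_C}=O$; thus this block contributes nothing and no estimate is needed. Next, for the $A$-block: the optimality condition of \eqref{e8a} gives $(A^{(k+1)}B^{(k)}-X^{(k)})(B^{(k)})^*+\lambda(A^{(k+1)}-A^{(k)})=O$, so $(A^{(k+1)}B^{(k)}-X^{(k)})(B^{(k)})^*=-\lambda(A^{(k+1)}-A^{(k)})$. I would then write
$$(A^{(k)}B^{(k)}-X^{(k)})(B^{(k)})^* = (A^{(k+1)}B^{(k)}-X^{(k)})(B^{(k)})^* + (A^{(k)}-A^{(k+1)})B^{(k)}(B^{(k)})^*,$$
and apply the triangle inequality together with Proposition \ref{p2.1} to bound the Frobenius norm by $(\lambda + \|B^{(k)}\|_F^2)\|A^{(k)}-A^{(k+1)}\|_F$. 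Boundedness of the sequence gives a uniform bound $\|B^{(k)}\|_F\le \beta$, so this block is $O(\|A^{(k)}-A^{(k+1)}\|_F)$. For the $B$-block I would argue analogously using the optimality condition of \eqref{e9a}: $(A^{(k+1)})^*(A^{(k+1)}B^{(k+1)}-X^{(k)})+\lambda(B^{(k+1)}-B^{(k)})=O$, and then write
$$A^*(AB-X)\big|_{(A^{(k)},B^{(k)},X^{(k)})} - \text{(the vanishing quantity at step }k+1\text{)}$$
as a sum of terms each carrying a factor $\|A^{(k)}-A^{(k+1)}\|_F$ or $\|B^{(k)}-B^{(k+1)}\|_F$ (expanding $A^{(k)}=A^{(k+1)}+(A^{(k)}-A^{(k+1)})$ and likewise for $B$), again controlled by Proposition \ref{p2.1} and the uniform bounds on $\|A^{(k)}\|_F,\|B^{(k)}\|_F$ and $\|X^{(k)}\|_F$. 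Combining the three blocks yields
$$\|\Pi_{\Sigma}(\nabla f(A^{(k)},B^{(k)},X^{(k)}))\|_F \le C\left(\|A^{(k)}-A^{(k+1)}\|_F + \|B^{(k)}-B^{(k+1)}\|_F\right)$$
for a constant $C$ depending only on $\lambda$ and the uniform bounds, and since the left-hand side of the claimed inequality also contains $\|X^{(k)}-X^{(k+1)}\|_F$, taking $\eta = 1/C$ finishes the proof (using $\|(A^{(k)},B^{(k)},X^{(k)})-(A^{(k+1)},B^{(k+1)},X^{(k+1)})\|_F \ge \max$ of the individual norms, hence $\ge$ a fixed fraction of their sum).

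The main obstacle I anticipate is the bookkeeping in the $B$-block, where $A$ appears twice (in $A^*$ and in $AB$), so replacing $A^{(k)}$ by $A^{(k+1)}$ produces a cross term of the form $(A^{(k)}-A^{(k+1)})^*(A^{(k)}B^{(k)}-X^{(k)})$ whose norm must be bounded using the boundedness of $A^{(k)}B^{(k)}-X^{(k)}$ — this is where I must invoke that the whole sequence, and hence $\{A^{(k)}B^{(k)}-X^{(k)}\}$, is bounded. A minor care point is that the optimality conditions are naturally stated at the $(k+1)$ point while the projected gradient we must bound is evaluated at the $(k)$ point; the discrepancy is exactly absorbed into the difference terms, which is the whole mechanism of the estimate. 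Everything else is a routine application of the triangle inequality and Proposition \ref{p2.1}.
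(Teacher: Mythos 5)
Your proposal is correct and follows essentially the same route as the paper: bound each block of $\Pi_{\Sigma}(\nabla f({\bf Z}^{(k)}))$ by rewriting it via the subproblem optimality conditions as (bounded matrix) times (successive difference), then invoke boundedness of the sequence and Proposition \ref{p2.1}. In fact you are slightly more careful than the paper on the $B$-block, where the paper's ``similarly'' glosses over the mismatch between $A^{(k)}$ and $A^{(k+1)}$ that your cross-term expansion handles explicitly.
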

\begin{proof}  By (\ref{e8}) and Proposition \ref{p2.1}, we have
$$\begin{array} {rl} & \left\|\left(A^{(k)}B^{(k)}-X^{(k)}\right)\left(B^{(k)}\right)^*\right\|_F^2 \\
= & \left\|\left(A^{(k)}-A^{(k+1)}\right)\left[B^{(k)}\left(B^{(k)}\right)^*+ \lambda I_r\right]\right\|_F^2 \\
\le &  \left\|A^{(k)}-A^{(k+1)}\right\|_F^2 \left\|B^{(k)}\left(B^{(k)}\right)^*+ \lambda I_r\right\|_F^2 \\
\le & {1 \over \eta^2} \left\|A^{(k+1)} - A^{(k)}\right\|_F^2.\end{array}$$
Here $\eta$ is a positive constant.   Such a positive constant exists, as $\left\{ \left(A^{(k)}, B^{(k)}, X^{(k)} \right) : k = 0, 1, 2, \cdots \right\}$ is bounded.
Similarly, by (\ref{e9}) and Proposition \ref{p2.1}, we have
$$\left\|\left(A^{(k)}\right)^*\left(A^{(k)}B^{(k)}-X^{(k)}\right)\right\|_F^2 \le {1 \over \eta^2} \left\|B^{(k+1)} - B^{(k)}\right\|_F^2.$$
We also have
$$\left\|\left(X^{(k)}-A^{(k)}B^{(k)}\right)_{\Omega_C}\right\|_F^2 \le {1 \over \eta^2} \left\|X^{(k+1)} - X^{(k)}\right\|_F^2.$$

By (\ref{e17}), we have
$$\begin{array}{rl} & \left\| \Pi_{\Sigma}\left(\nabla f(A^{(k)}, B^{(k)}, X^{(k)})\right)\right\|_F^2 \\ & = \left\|\left(A^{(k)}B^{(k)}-X^{(k)}\right)\left(B^{(k)}\right)^*\right\|_F^2 + \left\|\left(A^{(k)}\right)^*\left(A^{(k)}B^{(k)}-X^{(k)}\right)\right\|_F^2 + \left\|\left(X^{(k)}-A^{(k)}B^{(k)}\right)_{\Omega_C}\right\|_F^2. \end{array}$$

Summing up the last three inequalities and using the last equality, we have the conclusion.
\end{proof}

\begin{Lem} \label{l4.7}
Let ${\bf Z}^{\#}$ be one limiting point of $\{ {\bf Z}^{(k)} \}$.   Assume that ${\bf Z}^{(0)}$ satisfies
${\bf Z}^{(0)} \in N$ and $\left\|{\bf Z}^{(0)} - {\bf Z}^{\#}\right\| \le \rho$, where
\begin{equation} \label{e18}
\rho > {2\mu \over \lambda \eta (1-\theta)}\left|f({\bf Z}^{(0)}) - f({\bf Z}^{\#})\right|^{1-\theta} + \left\|{\bf Z}^{(0)}-{\bf Z}^{\#}\right\|.
\end{equation}
Then we have the following conclusions:
\begin{equation} \label{e19}
\left\|{\bf Z}^{(k)} - {\bf Z}^{\#}\right\| \le \rho, \ \ {\rm for}\ k = 0, 1, 2, \cdots
\end{equation}
and
\begin{equation} \label{e20}
\sum_{k=0}^\infty \left\| {\bf Z}^{(k)} - {\bf Z}^{(k+1)} \right\| \le {2\mu \over \lambda \eta (1-\theta)}\left|f({\bf Z}^{(0)} - f({\bf Z}^{\#})\right|^{1-\theta}.
\end{equation}
\end{Lem}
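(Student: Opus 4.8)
The plan is to run the standard Kurdyka--{\L}ojasiewicz (KL) argument for descent schemes: combine the sufficient-decrease bound of Lemma~\ref{l4.3}, the relative-error bound of Lemma~\ref{l4.6}, the KL inequality~(\ref{e16}), and concavity of $t \mapsto t^{1-\theta}$, all organized by an induction that keeps the iterates inside the neighborhood $N$ on which~(\ref{e16}) holds. Before starting I record some preliminaries. Since $f \ge 0$ and $\{f({\bf Z}^{(k)})\}$ is non-increasing by Lemma~\ref{l4.3}, it converges; since $f$ is continuous and ${\bf Z}^{\#}$ is a limiting point of $\{{\bf Z}^{(k)}\}$, that limit must equal $f({\bf Z}^{\#})$, so $\delta_k := f({\bf Z}^{(k)}) - f({\bf Z}^{\#}) \ge 0$ for every $k$. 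Where $\delta_k = 0$, monotonicity forces $f({\bf Z}^{(k+1)}) = f({\bf Z}^{\#})$ too, hence Lemma~\ref{l4.3} gives ${\bf Z}^{(k+1)} = {\bf Z}^{(k)}$ and every estimate below is trivial; so I may assume $\delta_k > 0$ throughout. I also take $\rho$ small enough that $\{ {\bf Z} : \|{\bf Z} - {\bf Z}^{\#}\| \le \rho \} \subseteq N$, which is implicit in the hypothesis and possible since $N$ is a neighborhood of ${\bf Z}^{\#}$.

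The engine is a one-step displacement bound. Put $\psi(t) = t^{1-\theta}$; since $\theta \in [1/2, 1)$, $\psi$ is increasing and concave on $[0,\infty)$, so $\psi(\delta_k) - \psi(\delta_{k+1}) \ge (1-\theta)\,\delta_k^{-\theta}(\delta_k - \delta_{k+1}) = (1-\theta)\,\delta_k^{-\theta}(f({\bf Z}^{(k)}) - f({\bf Z}^{(k+1)}))$. Assuming ${\bf Z}^{(k)} \in N$, inequality~(\ref{e16}) gives $\delta_k^{-\theta} \ge (\mu\,\|\Pi_{\Sigma}(\nabla f({\bf Z}^{(k)}))\|_F)^{-1}$, Lemma~\ref{l4.6} gives $\|\Pi_{\Sigma}(\nabla f({\bf Z}^{(k)}))\|_F \le \eta^{-1}\|{\bf Z}^{(k)} - {\bf Z}^{(k+1)}\|$, and Lemma~\ref{l4.3} gives $f({\bf Z}^{(k)}) - f({\bf Z}^{(k+1)}) \ge \frac{\lambda}{2}\|{\bf Z}^{(k)} - {\bf Z}^{(k+1)}\|^2$. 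Chaining these and cancelling one factor of $\|{\bf Z}^{(k)} - {\bf Z}^{(k+1)}\|$ yields
$$\|{\bf Z}^{(k)} - {\bf Z}^{(k+1)}\| \le \frac{2\mu}{\lambda\eta(1-\theta)}\,(\psi(\delta_k) - \psi(\delta_{k+1})), \qquad \text{valid whenever } {\bf Z}^{(k)} \in N.$$

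Next I would prove by induction on $k$ that $\|{\bf Z}^{(j)} - {\bf Z}^{\#}\| \le \rho$ for all $0 \le j \le k$. The base case $k = 0$ is the hypothesis on ${\bf Z}^{(0)}$. For the step, the inductive hypothesis places ${\bf Z}^{(0)}, \dots, {\bf Z}^{(k)}$ in $N$, so the one-step bound holds for $j = 0, \dots, k$; summing it, telescoping $\sum_{j=0}^{k}(\psi(\delta_j) - \psi(\delta_{j+1})) = \psi(\delta_0) - \psi(\delta_{k+1}) \le \psi(\delta_0)$, and using the triangle inequality,
$$\|{\bf Z}^{(k+1)} - {\bf Z}^{\#}\| \le \|{\bf Z}^{(0)} - {\bf Z}^{\#}\| + \sum_{j=0}^{k}\|{\bf Z}^{(j)} - {\bf Z}^{(j+1)}\| \le \|{\bf Z}^{(0)} - {\bf Z}^{\#}\| + \frac{2\mu}{\lambda\eta(1-\theta)}\psi(\delta_0) < \rho,$$
the last strict inequality being exactly~(\ref{e18}); this establishes~(\ref{e19}). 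Since~(\ref{e19}) shows every iterate lies in $N$, the one-step bound holds for all $k$, and letting the telescoping sum run to infinity (using $\psi(\delta_{k+1}) \ge 0$) gives $\sum_{k=0}^{\infty}\|{\bf Z}^{(k)} - {\bf Z}^{(k+1)}\| \le \frac{2\mu}{\lambda\eta(1-\theta)}\psi(\delta_0) = \frac{2\mu}{\lambda\eta(1-\theta)}|f({\bf Z}^{(0)}) - f({\bf Z}^{\#})|^{1-\theta}$, which is~(\ref{e20}).

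The main obstacle is the circular dependence the induction must break: the one-step bound and~(\ref{e16}) are available only while the running iterate lies in $N$, yet the summed displacement needed to place ${\bf Z}^{(k+1)}$ in $N$ already presupposes ${\bf Z}^{(0)}, \dots, {\bf Z}^{(k)} \in N$. The choice of $\rho$ in~(\ref{e18}) is exactly what closes the loop: its slack above $\|{\bf Z}^{(0)} - {\bf Z}^{\#}\|$ is large enough to absorb the entire displacement bound $\frac{2\mu}{\lambda\eta(1-\theta)}\psi(\delta_0)$. Everything else --- the concavity estimate for $\psi$, the chaining of the three lemmas, the telescoping --- is routine, the only minor bookkeeping being the degenerate case $\delta_k = 0$ dealt with above.
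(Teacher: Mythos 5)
Your proposal is correct and follows essentially the same route as the paper's proof: the one-step displacement bound obtained by chaining the concavity of $t\mapsto t^{1-\theta}$ with Lemma~\ref{l4.3}, the KL inequality~(\ref{e16}), and Lemma~\ref{l4.6}, followed by the induction that keeps the iterates in the ball of radius $\rho$ and the telescoping sum. Your explicit handling of the degenerate case $f({\bf Z}^{(k)})=f({\bf Z}^{\#})$ and the justification that $f({\bf Z}^{(k)})-f({\bf Z}^{\#})\ge 0$ are small points of extra care that the paper leaves implicit.
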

\begin{proof} We prove (\ref{e19}) by induction.    By assumption, (\ref{e19}) holds for $k=0$.   We now assume that there is an integer $\hat k$ such that (\ref{e19}) holds for $0 \le k \le {\hat k}$.  This means that the {K}urdyka-{\L}ojasiewicz inequality holds at these points.   We now prove that (\ref{e19}) holds for $k= {\hat k}+1$.   Define a scalar function
$$\psi(\alpha):= {1 \over 1-\theta}\left|\alpha -f ({\bf Z}^{\#})\right|^{1-\theta}.$$
Then $\psi$ is a concave function and $\psi'(\alpha) = |\alpha - f({\bf Z}^{\#})|^{-\theta}$ if $\alpha \ge f({\bf Z}^{\#})$.  For $0 \le k \le \hat k$, we have
$$\begin{array}{rl}
& \psi\left(f\left({\bf Z}^{(k)}\right)\right) - \psi\left(f\left({\bf Z}^{(k+1)}\right)\right) \\
\ge & \psi'\left(f\left({\bf Z}^{(k)}\right)\right)\left[f\left({\bf Z}^{(k)}\right)-f\left({\bf Z}^{(k+1)}\right)\right] \\
\ge & {1 \over \left|f\left({\bf Z}^{(k)}\right)-f\left({\bf Z}^{(\#)}\right)\right|^\theta}{\lambda \over 2}\left\|{\bf Z}^{(k)} - {\bf Z}^{(k+1)}\right\|_F^2 \ \ \ {{\rm [by \ Lemma \ \ref{l4.3} ]}} \\
\ge & {\lambda \over 2\mu}{\left\|{\bf Z}^{(k)} - {\bf Z}^{(k+1)}\right\|_F^2 \over \left\|\Pi_{\Sigma}\left(\nabla f\left({\bf Z}^{(k)}\right)\right)\right\|} \ \ \ {{\rm [by \ (\ref{e16}) ]}} \\
\ge & {\lambda \eta \over 2\mu}{\left\|{\bf Z}^{(k)} - {\bf Z}^{(k+1)}\right\|_F^2 \over \left\|{\bf Z}^{(k)} - {\bf Z}^{(k+1)}\right\|_F} \ \ \ {{\rm [by \ Lemma \ \ref{l4.6} ]}} \\
\ge & {\lambda \eta \over 2\mu}\left\|{\bf Z}^{(k)} - {\bf Z}^{(k+1)}\right\|_F.
\end{array}$$
Summarizing $k$ from $0$ to $\hat k$, we have
\begin{eqnarray}
& & \sum_{k=0}^{\hat k} \left\|{\bf Z}^{(k)} - {\bf Z}^{(k+1)}\right\|_F \nonumber \\
& \le & {2\mu \over \lambda \eta}  \sum_{k=0}^{\hat k} \left[\psi\left(f\left({\bf Z}^{(k)}\right)\right) - \psi\left(f\left({\bf Z}^{(k+1)}\right)\right)\right] \nonumber \\
& = & {2\mu \over \lambda \eta} \left[\psi\left(f\left({\bf Z}^{(0)}\right)\right) - \psi\left(f\left({\bf Z}^{(\hat k+1)}\right)\right)\right] \nonumber \\
& \le & {2\mu \over \lambda \eta} \psi\left(f\left({\bf Z}^{(0)}\right)\right). \label{e21}
\end{eqnarray}
By this and (\ref{e18}), we have
$$\begin{array}{rl}
& \left\|{\bf Z}^{(\hat k+1)} - {\bf Z}^{(\#)}\right\|_F \\
\le & \sum_{k=0}^{\hat k} \left\|{\bf Z}^{(k)} - {\bf Z}^{(k+1)}\right\|_F + \left\|{\bf Z}^{(0)}-{\bf Z}^{\#}\right\| \\
\le & {2\mu \over \lambda \eta} \psi\left(f\left({\bf Z}^{(0)}\right)\right) + \left\|{\bf Z}^{(0)}-{\bf Z}^{\#}\right\| \\
< & \rho.
\end{array}$$
This proves (\ref{e19}).

Letting $\hat k \to \infty$ in (\ref{e21}) and using (\ref{e18}), we have (\ref{e20}).
\end{proof}

\begin{Thm} \label{t4.8}
Suppose that the LRQD algorithm generates a bounded sequence $\{ {\rm Z}^{(k)} \}$.  Then
$$\sum_{k=0}^\infty \left\| {\bf Z}^{(k)} - {\bf Z}^{(k+1)} \right\| \le \infty,$$
which means that the entire sequence $\{ {\rm Z}^{(k)} \}$ converges to a limit.
\end{Thm}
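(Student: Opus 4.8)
The plan is to combine the sufficient-decrease estimate of Lemma~\ref{l4.3}, subsequential convergence coming from boundedness, and the Kurdyka--{\L}ojasiewicz machinery already packaged in Lemma~\ref{l4.7}. First I would note that by Lemma~\ref{l4.3} the sequence $\left\{f\left({\bf Z}^{(k)}\right)\right\}$ is nonincreasing, and since $f \ge 0$ it is bounded below, hence it converges to some value $f^{\#}$. Because $\left\{{\bf Z}^{(k)}\right\}$ is bounded, it has a subsequence $\left\{{\bf Z}^{(k_i)}\right\}$ converging to a limiting point ${\bf Z}^{\#}$; by continuity $f\left({\bf Z}^{\#}\right) = f^{\#}$, and by the theorem showing that every limiting point is a stationary point of (\ref{e6}), ${\bf Z}^{\#}$ is a critical point of $f$, so the Kurdyka--{\L}ojasiewicz inequality (\ref{e16}) holds on some neighborhood $N$ of ${\bf Z}^{\#}$ with some $\theta \in [{1 \over 2}, 1)$ and some $\mu > 0$, while Lemma~\ref{l4.6} supplies the constant $\eta$.

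Next I would dispose of a degenerate case: if $f\left({\bf Z}^{(k_0)}\right) = f^{\#}$ for some finite $k_0$, then Lemma~\ref{l4.3} forces all three norm differences at step $k_0$ to vanish, hence ${\bf Z}^{(k+1)} = {\bf Z}^{(k)}$ for every $k \ge k_0$, the sequence is eventually constant, and the claim is immediate. Otherwise $f\left({\bf Z}^{(k)}\right) > f^{\#}$ for all $k$, and the monotone convergence $f\left({\bf Z}^{(k_i)}\right) \to f^{\#}$ together with ${\bf Z}^{(k_i)} \to {\bf Z}^{\#}$ makes both $\left\|{\bf Z}^{(k_i)} - {\bf Z}^{\#}\right\|$ and $\left|f\left({\bf Z}^{(k_i)}\right) - f\left({\bf Z}^{\#}\right)\right|^{1-\theta}$ arbitrarily small. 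So, after fixing $\rho > 0$ small enough that the ball $\left\{{\bf Z} : \left\|{\bf Z} - {\bf Z}^{\#}\right\| \le \rho\right\}$ is contained in $N$, I can pick an index $k_i$ large enough that ${\bf Z}^{(k_i)} \in N$ and the right-hand side of (\ref{e18}), with ${\bf Z}^{(0)}$ replaced by ${\bf Z}^{(k_i)}$, is strictly less than $\rho$.

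I would then apply Lemma~\ref{l4.7} to the tail $\left\{{\bf Z}^{(k)}\right\}_{k \ge k_i}$, which is itself a sequence produced by the LRQD algorithm started from the pair $\left(A^{(k_i)}, B^{(k_i)}\right)$ (by (\ref{e7})--(\ref{e9}) the restart regenerates ${\bf Z}^{(k_i)}$ and all later iterates unchanged), and which still has ${\bf Z}^{\#}$ as a limiting point. Lemma~\ref{l4.7} gives $\sum_{k=k_i}^{\infty} \left\|{\bf Z}^{(k)} - {\bf Z}^{(k+1)}\right\| < \infty$, and adding the finite head $\sum_{k=0}^{k_i-1} \left\|{\bf Z}^{(k)} - {\bf Z}^{(k+1)}\right\|$ yields $\sum_{k=0}^{\infty} \left\|{\bf Z}^{(k)} - {\bf Z}^{(k+1)}\right\| < \infty$. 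Absolute summability of the consecutive differences makes $\left\{{\bf Z}^{(k)}\right\}$ a Cauchy sequence in the finite-dimensional space ${\mathbb Q}^{m \times r} \times {\mathbb Q}^{r \times n} \times {\mathbb Q}^{m \times n}$, hence it converges to a limit, which by the stationarity theorem is a stationary point of (\ref{e6}).

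The main obstacle I anticipate is not a computation but the bookkeeping needed to legitimately invoke Lemma~\ref{l4.7}: one must check that restarting the algorithm at ${\bf Z}^{(k_i)}$ reproduces the original tail verbatim, that ${\bf Z}^{\#}$ survives as a limiting point of that tail, and---most delicately---that the threshold condition (\ref{e18}) can be satisfied simultaneously with the requirement ${\bf Z}^{(k_i)} \in N$. The last point is exactly where the monotone decrease of the function values is indispensable, since it guarantees $f\left({\bf Z}^{(k_i)}\right) - f\left({\bf Z}^{\#}\right) \to 0^{+}$ along the full sequence and not merely along the subsequence; I would isolate this as a short preliminary observation before feeding it into Lemma~\ref{l4.7}.
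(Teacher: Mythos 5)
Your proposal is correct and follows essentially the same route as the paper's own (much terser) proof: locate a limiting point of the bounded sequence, pick an iterate close enough that the hypothesis (\ref{e18}) of Lemma~\ref{l4.7} is met, treat that iterate as the initial point, and conclude summability of the consecutive differences and hence convergence. Your additional bookkeeping --- using the monotone decrease of $f\left({\bf Z}^{(k)}\right)$ to make the right-hand side of (\ref{e18}) small, fixing $\rho$ so the ball lies in $N$, and handling the eventually-constant case --- fills in details the paper leaves implicit, and is exactly the right way to make the argument rigorous.
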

\begin{proof}
Because $\{ {\rm Z}^{(k)} \}$ is bounded, it must have a limiting point ${\bf Z}^{\#}$.  Then there is an index $k_0$ such that
$$\left\| {\bf Z}^{k_0} - {\bf Z}^{\#} \right\| \le \rho.$$
We may regard ${\bf Z}^{k_0}$ as the initial point.   Then Lemma \ref{l4.7} holds.  The entire sequence satisfies (\ref{e20}).   The theorem is proved.
\end{proof}

Finally, we establish convergence rates for the convergence of this sequence.

\begin{Thm}
Suppose that the LRQD algorithm generates a bounded sequence $\{ {\rm Z}^{(k)} \}$.    In (\ref{e16}),

(1) if
$\theta = {1 \over 2}$, then there exist $\sigma \in [0, 1)$ and $\beta > 0$ such that
$$\left\|{\bf Z}^{(k)} - {\bf Z}^{\#}\right\| \le \beta \sigma^k,$$
i.e., the sequence converges R-linearly;

(2) if ${1 \over 2} < \theta < 1$, then there exists $\beta > 0$ such that
$$\left\|{\bf Z}^{(k)} - {\bf Z}^{\#}\right\| \le \beta k^{-{1-\theta \over 2\theta - 1}}.$$
\end{Thm}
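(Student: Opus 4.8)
The plan is to bound $\|{\bf Z}^{(k)}-{\bf Z}^{\#}\|_F$ by the tail of the series whose summability was established in Theorem \ref{t4.8}. Write
$$\Delta_k := \sum_{j=k}^{\infty}\left\|{\bf Z}^{(j)}-{\bf Z}^{(j+1)}\right\|_F,$$
which is finite, satisfies $\Delta_k-\Delta_{k+1}=\|{\bf Z}^{(k)}-{\bf Z}^{(k+1)}\|_F$, and dominates $\|{\bf Z}^{(k)}-{\bf Z}^{\#}\|_F$ by the triangle inequality (the whole sequence converges to ${\bf Z}^{\#}$ by Theorem \ref{t4.8}). Hence it suffices to estimate the decay of $\Delta_k$. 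We may discard finitely many initial iterates -- this only enlarges the eventual constant $\beta$ -- so that every ${\bf Z}^{(k)}$ lies in the {K}urdyka-{\L}ojasiewicz neighborhood $N$ of ${\bf Z}^{\#}$; since $f({\bf Z}^{(k)})$ is nonincreasing (Lemma \ref{l4.3}) and tends to $f({\bf Z}^{\#})$ by continuity, we have $f({\bf Z}^{(k)})\ge f({\bf Z}^{\#})$ for all $k$, and we may take this inequality and $\Delta_k>0$ to be strict, since otherwise the sequence is eventually stationary at ${\bf Z}^{\#}$ and both claimed bounds are immediate.

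The next step is to close a one-step recursion for $\Delta_k$. Repeating the telescoping chain in the proof of Lemma \ref{l4.7} but starting the summation at an arbitrary index $k$ gives
$$\Delta_k \le \frac{2\mu}{\lambda\eta(1-\theta)}\left|f({\bf Z}^{(k)})-f({\bf Z}^{\#})\right|^{1-\theta}.$$
On the other hand, the {K}urdyka-{\L}ojasiewicz inequality (\ref{e16}) at ${\bf Z}^{(k)}$ combined with Lemma \ref{l4.6} yields
$$\left|f({\bf Z}^{(k)})-f({\bf Z}^{\#})\right|^{\theta} \le \mu\left\|\Pi_{\Sigma}\left(\nabla f({\bf Z}^{(k)})\right)\right\|_F \le \frac{\mu}{\eta}\left\|{\bf Z}^{(k)}-{\bf Z}^{(k+1)}\right\|_F = \frac{\mu}{\eta}\left(\Delta_k-\Delta_{k+1}\right).$$
Raising the second inequality to the power $\tfrac{1-\theta}{\theta}$ and substituting into the first gives
$$\Delta_k \le C\left(\Delta_k-\Delta_{k+1}\right)^{\frac{1-\theta}{\theta}}, \qquad C := \frac{2\mu}{\lambda\eta(1-\theta)}\left(\frac{\mu}{\eta}\right)^{\frac{1-\theta}{\theta}} > 0,$$
a self-contained inequality in the scalars $\{\Delta_k\}$ alone.

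For part (1), $\theta=\tfrac12$ makes the exponent equal to $1$, so $\Delta_k\le C(\Delta_k-\Delta_{k+1})$; since $\Delta_{k+1}\ge 0$ and $\Delta_k>0$ this forces $C\ge 1$ and $\Delta_{k+1}\le\sigma\Delta_k$ with $\sigma:=1-\tfrac1C\in[0,1)$, and iterating gives $\Delta_k\le\beta\sigma^k$, hence $\|{\bf Z}^{(k)}-{\bf Z}^{\#}\|_F\le\beta\sigma^k$ (R-linear convergence). For part (2), set $q:=\tfrac{\theta}{1-\theta}>1$, so the recursion reads $\Delta_k-\Delta_{k+1}\ge\kappa\,\Delta_k^{\,q}$ with $\kappa:=C^{-q}>0$. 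I would then invoke the elementary fact that $s\mapsto s^{1-q}$ is convex and decreasing on $(0,\infty)$: its subgradient inequality evaluated between $\Delta_k$ and $\Delta_{k+1}$ gives
$$\Delta_{k+1}^{\,1-q}-\Delta_k^{\,1-q} \ge (q-1)\,\Delta_k^{-q}\left(\Delta_k-\Delta_{k+1}\right) \ge (q-1)\kappa,$$
and telescoping from the starting index $k_0$ produces $\Delta_k^{\,1-q}\ge(q-1)\kappa\,(k-k_0)$, i.e. $\Delta_k\le\left[(q-1)\kappa\,(k-k_0)\right]^{-\frac{1}{q-1}}$. Since $\tfrac{1}{q-1}=\tfrac{1-\theta}{2\theta-1}$, this yields $\|{\bf Z}^{(k)}-{\bf Z}^{\#}\|_F\le\Delta_k\le\beta\,k^{-\frac{1-\theta}{2\theta-1}}$ for a suitable $\beta>0$.

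The only genuine computations are reproducing the Lemma \ref{l4.7} telescoping from a shifted starting index and verifying the convexity estimate for $s\mapsto s^{1-q}$; neither is delicate. The one point that demands care is the bookkeeping of validity ranges -- the three displayed inequalities hold only after the iterates have entered $N$ and once $f({\bf Z}^{(k)})>f({\bf Z}^{\#})$ -- together with the separate, trivial treatment of the degenerate case in which some $\Delta_k$ vanishes. I do not expect any obstacle beyond this, since the substantive analytic ingredients (uniform positivity of the constant $\eta$, the {K}urdyka-{\L}ojasiewicz property, and summability of $\sum_k\|{\bf Z}^{(k)}-{\bf Z}^{(k+1)}\|_F$) have already been secured in Lemmas \ref{l4.3}, \ref{l4.6}, \ref{l4.7} and Theorem \ref{t4.8}.
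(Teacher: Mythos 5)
Your proposal is correct and follows essentially the same route as the paper: the tail sums $\Delta_k=\sum_{j\ge k}\|{\bf Z}^{(j)}-{\bf Z}^{(j+1)}\|$, the recursion $\Delta_k\le C(\Delta_k-\Delta_{k+1})^{\frac{1-\theta}{\theta}}$ obtained by combining Lemma \ref{l4.7} (restarted at index $k$), the inequality (\ref{e16}) and Lemma \ref{l4.6}, then a geometric contraction for $\theta=\tfrac12$ and a telescoped lower bound on $\Delta_{k+1}^{1-q}-\Delta_k^{1-q}$ for $\tfrac12<\theta<1$. The only cosmetic difference is that you obtain the latter bound from convexity of $s\mapsto s^{1-q}$ while the paper uses an integral comparison with $\zeta(\alpha)=\alpha^{-\theta/(1-\theta)}$ --- these yield the identical constant --- and your explicit remarks that $C\ge1$ (so $\sigma\in[0,1)$), that the degenerate case $\Delta_k=0$ is trivial, and that the estimates are only valid once the iterates enter $N$, are welcome tightenings of the paper's argument.
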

\begin{proof}
Without loss of generality, assume that $\left\| {\bf Z}^{(0)} - {\bf Z}^{\#}\right\| < \rho$.  Let
\begin{equation} \label{e22}
\Delta_k := \sum_{i=k}^\infty \left\| {\bf Z}^{(i)} - {\bf Z}^{(i+1)} \right\| \ge \left\| {\bf Z}^{(k)} - {\bf Z}^{\#} \right\|.
\end{equation}
By Lemma \ref{l4.7}, we have
\begin{eqnarray}
\Delta_k  & \le  & {2\mu \over \lambda \eta (1-\theta)}\left|f({\bf Z}^{(k)} - f({\bf Z}^{\#})\right|^{1-\theta} \nonumber \\
& = & {2\mu \over \lambda \eta (1-\theta)}\left[\left|f({\bf Z}^{(k)} - f({\bf Z}^{\#})\right|^\theta \right]^{1-\theta \over \theta} \nonumber \\
& \le & {2\mu \over \lambda \eta (1-\theta)} \mu^{1-\theta \over \theta} \left\|\Pi_{\Sigma}\left(\nabla f\left({\bf Z}^{(k)}\right)\right)\right\|^{1-\theta \over \theta}  \ \ \ {{\rm [by \ (\ref{e16}) ]}} \nonumber \\
& \le & {2\mu \over \lambda \eta (1-\theta)} \left({\mu \over \eta}\right)^{1-\theta \over \theta} \left\|{\bf Z}^{(k)} - {\bf Z}^{(k+1)} \right\|^{1-\theta \over \theta}  \ \ \ {{\rm [by \ Lemma \ \ref{l4.6} ]}} \nonumber \\
& = & {2\mu^{1 \over \theta} \over \lambda \eta^{1 \over \theta} (1-\theta)}  \left\|{\bf Z}^{(k)} - {\bf Z}^{(k+1)} \right\|^{1-\theta \over \theta}.   \label{e23}
\end{eqnarray}

(1) If $\theta = {1 \over 2}$, then ${1 - \theta \over \theta} = 1$.  By (\ref{e23}), we have
$$\Delta_k \le {2\mu^{1 \over \theta} \over \lambda \eta^{1 \over \theta} (1-\theta)} \left(\Delta_k - \Delta_{k+1}\right).$$
This implies that
\begin{equation} \label{e24}
\Delta_{k+1} \le \sigma \Delta_k,
\end{equation}
where
$$\sigma = {2\mu^{1 \over \theta} - \lambda \eta^{1 \over \theta} (1-\theta) \over 2\mu^{1 \over \theta}}.$$
By (\ref{e22}) and (\ref{e23}), we know that
$$\left\|{\bf Z}^{(k)} - {\bf Z}^{\#}\right\| \le \Delta_k \le \sigma \Delta_{k-1} \le \cdots \le \sigma^k \Delta_0 \equiv \beta \sigma^k,$$
where $\beta \equiv \Delta_0$ is finite by Theorem \ref{t4.8}.   The conclusion follows.

(2)  Let
$$\kappa^{1-\theta \over \theta} = {2\mu^{1 \over \theta} \over \lambda \eta^{1 \over \theta} (1-\theta)}.$$
By (\ref{e23}),
$$\Delta_k^{\theta \over 1-\theta} \le \kappa \left(\Delta_k - \Delta_{k+1}\right).$$
Define $\zeta(\alpha) := \alpha^{-{\theta \over 1-\theta}}$.   Then $\zeta$ is monotonically decreasing.  We have
$$\begin{array}{rl}
{1 \over \kappa} \le & \zeta(\Delta_k)\left(\Delta_k - \Delta_{k+1}\right) \\
= & \int_{\Delta_{k+1}}^{\Delta_k}\zeta(\Delta_k) d\alpha \\
\le & \int_{\Delta_{k+1}}^{\Delta_k}\zeta(\alpha) d\alpha \\
= & -{1-\theta \over 2\theta -1}\left(\Delta_k^{-{2\theta-1 \over 1-\theta}} - \Delta_{k+1}^{-{2\theta-1 \over 1-\theta}} \right).
\end{array}$$
Denote $\nu := {-{2\theta-1 \over 1-\theta}}$.  Then $\nu < 0$ since ${1 \over 2} < \theta < 1$.
We have
$$\Delta_{k+1}^\nu - \Delta_{k}^\nu \ge \xi > 0,$$
where $\xi \equiv -{\nu \over \kappa}$.  This implies that
$$\Delta_k \le \left[\Delta_0^\nu + k\xi\right]^{1 \over \nu} \le (k\xi)^{1 \over \nu}.$$
Letting $\beta = \xi^{1 \over \nu}$, we have the conclusion.
\end{proof}


\section{Concluding Remarks}

In a certain sense, the gradient of a real function in quaternion matrix variables, studied in Section 4, is a kind of the pseudo-derivative mentioned in \cite{XJTM15}.   What we presented in (\ref{n14}) is a concise form of such a pseudo-derivative.   It turns out to give us simple expressions for gradients of functions, and optimality conditions of optimization problems, involved in our convergence analysis.  It may be worth further exploring the use of (\ref{n14}).

\bigskip

{\bf Acknowledgment}  We are thankful to Prof. Qingwen Wang for his help on quaternion matrices.



\end{document}